      \def\cF{{\mathcal F}}
      \def\cL{{\mathcal L}}
\def\cP{{\mathcal P}}   \def\cQ{{\mathcal Q}}
\def\cal H{{\mathcal H}}
\def\R{\mathbb{R}}
\def\C{\mathbb{C}}
\def\N{\mathbb{N}}
\def\ran{{\text{\rm ran\,}}}
\def\dom{{\text{\rm dom\,}}}
\def\phi{\varphi}
\DeclareMathOperator{\spann}{span}
\renewcommand{\theta}{\vartheta}
\newtheorem{theorem}{Theorem}[section]
\newtheorem*{thm*}{Theorem}
\newtheorem{proposition}[theorem]{Proposition}
\newtheorem{lemma}[theorem]{Lemma}
\theoremstyle{definition}
\newtheorem{definition}[theorem]{Definition}
\newtheorem{hypothesis}[theorem]{Hypothesis}
\newtheorem{classification}[theorem]{Classification}
\newtheorem{example}[theorem]{Example}
\newtheorem{remark}[theorem]{Remark}
\numberwithin{equation}{section}
\title{Spectral monotonicity for Schr\"odinger operators on metric graphs}
\author[J.~Rohleder]{Jonathan Rohleder}
\address{Matematiska institutionen\\ Stockholms universitet \\
106 91 Stockholm \\
Sweden}
\email{jonathan.rohleder@math.su.se}
\author[C.~Seifert]{Christian Seifert}
\address{TU Hamburg \\ Institut f\"ur Mathematik \\
Am Schwarzenberg-Campus~3 \\
Geb\"aude E \\
21073 Hamburg \\
Germany}
\email{christian.seifert@tuhh.de}
\begin{document}

\keywords{metric graphs, Schr\"odinger operators, spectrum, surgery principles} 

\subjclass[2010]{34B45, 35P15 (primary), and 47A10, 47B25, 81Q35 (secondary)} 

\begin{abstract}
We study the influence of certain geometric perturbations on the spectra of self-adjoint Schr\"odinger operators on compact metric graphs. Results are obtained for permutation invariant vertex conditions, which, amongst others, include $\delta$ and $\delta'$-type conditions. We show that adding edges to the graph or joining vertices changes the eigenvalues monotonically. However, the monotonicity properties may differ from what is known for the previously studied cases of Kirchhoff (or standard) and $\delta$-conditions and may depend on the signs of the coefficients in the vertex conditions. 
\end{abstract}

\maketitle

\section{Introduction}

Differential operators on metric graphs, also called quantum graphs, have been studied extensively during the last two decades, see e.g.\ the two monographs \cite{BK13,Mugnolo2014} and the references therein. An aspect which came into focus rather recently is the behavior of the spectrum of Laplacians (or more general Schr\"odinger operators) on metric graphs under perturbations of the geometry and topology, see, e.g,~\cite{EJ12,KMN13}. These so-called surgery principles appear to be a natural and very useful tool for spectral investigation. They were applied successfully to derive Faber--Krahn type inequalities for graph Laplacians and eigenvalue estimates depending on various quantities of the graph such as the total length, the number of edges or vertices, the diameter or the Betti number, see~\cite{A16,BL17,BKKM17,KKT16,K18,KKMM16,KN17,KN14,R17}, where Kirchhoff (also called standard), $\delta$, or Dirichlet conditions at the vertices were treated.

In the present paper two types of perturbations of the graph are considered for a more general class of vertex conditions. More specifically, for a compact metric graph $\Gamma$ we focus on the change of the eigenvalues of a self-adjoint Schr\"odinger operator $H$ in $L^2 (\Gamma)$ when either
\begin{itemize}
 \item[(i)] an edge is added to the graph, or
 \item[(ii)] two vertices of the graph are joined into a single vertex.
\end{itemize}
For the case of Kirchhoff or $\delta$ vertex conditions it is known that the eigenvalues behave non-increasing under the perturbation (i) if the additional edge connects a vertex of $\Gamma$ to a new vertex of degree one while no general monotonicity principle is valid if the new edge connects two previously existing vertices, see~\cite{KMN13}. For the graph transformation (ii), the eigenvalues of Schr\"odinger operators subject to Kirchhoff or $\delta$-conditions are known to move non-decreasingly. 

In order to study eigenvalue monotonicity under the graph transformations (i) and (ii) for more general couplings, one needs to specify how vertex conditions change; actually, each of these transformations leads to an increase of the vertex degree of certain vertices. Hence, we will consider the question only for those vertex conditions which admit a canonical or {\em natural} extension to a larger vertex degree. This is the case if the vertex conditions are permutation invariant, i.e., different edges incident to the same vertex are not distinguished by the vertex conditions. Permutation invariant vertex conditions include $\delta$- and $\delta'$-type conditions. They are discussed in detail and classified in Section~\ref{sec:SO} below; cf.\ Classifiction~\ref{class}.

In the main results of this paper we observe that different types of permutation invariant vertex conditions behave differently under the considered transformations. Section~\ref{sec:attaching_edges} is devoted to the transformation (i). It turns out that there is a class of conditions for which, in contrast to Kirchhoff or $\delta$-conditions, all eigenvalues behave monotonically non-increasing if an edge is added connecting two previously present vertices; this class includes so-called anti-Kirchhoff as well as $\delta'$-type conditions, see Theorem~\ref{thm:monotonicity}. Moreover, in Theorem~\ref{thm:monotonicity2} it is shown that for all permutation invariant vertex conditions the eigenvalues behave non-increasingly if an edge connecting the specified vertex to a new vertex of degree one is added. These observations are complemented by examples. In Section~\ref{sec:joining_vertices} the behavior of the eigenvalues under the transformation (ii) is studied. It turns out that this transformation divides the permutation invariant conditions into three classes: those for which joining two vertices leads to non-decreasing eigenvalues (as for Kirchhoff and $\delta$-conditions), those for which it leads, conversely, to non-increasing eigenvalues, and those for which the monotonicity properties depend on the signs of the coefficients in the vertex conditions. The latter applies e.g.\ to $\delta'$-type conditions. The different classes of permutation invariant vertex conditions according to Classification \ref{class} are treated in Theorems \ref{thm:joining}, \ref{thm:joining_delta'} and \ref{thm:joining_ExnerConditions}.
We would like to mention that all results depend only on the vertex conditions at those vertices which are changed by the graph transformation. At all other vertices, general self-adjoint conditions are allowed.

The proofs of our results are all variational comparing Rayleigh quotients, which is a standard method in obtaining eigenvalue estimates. In fact, estimates on the quadratic form asociated to the Schr\"odinger operator on suitable finite-dimensional subspaces together with an application of the min-max principle yields the desired estimates for the eigenvalues.

After conceiving the paper we learned about the manuscript \cite{BKKM18+} dealing also with monotonicity properties for the spectrum
of the Laplacian with Kirchhoff, $\delta$, and Dirichlet boundary conditions, but for a larger toolkit of surgery principles. 

\section{Schr\"odinger operators with permutation invariant vertex conditions}
\label{sec:SO}

In this section we introduce the operators under consideration. First we recall some general facts on self-adjoint vertex (or coupling) conditions. After that we restrict our considerations to a subclass, the permutation invariant conditions, which is suitable for the questions under investigation.

Let $\Gamma$ be a finite, compact metric graph, i.e.\ a graph consisting of a finite vertex set $V:=V(\Gamma)$ and a finite edge set $E:=E(\Gamma)$ that is, additionally, equipped with a length function $L : E \to (0, \infty)$. We identify each edge $e\in E$ with the interval $[0, L(e)]\subseteq \R$ and obtain a natural metric on $\Gamma$. For each $e \in E$ we say that $e$ has initial vertex $v_{\rm i}$ and terminal vertex $v_{\rm t}$ if $e$ is incident to $v_{\rm i}$ and $v_{\rm t}$ such that $v_{\rm i}$ is identified with the zero endpoint of $[0, L (e)]$ and $v_{\rm t}$ is identified with the endpoint $L (e)$; note that $v_{\rm i}$ and $v_{\rm t}$ coincide if $e$ is a loop. Furthermore, for each vertex $v\in V$ we denote by $E_{v, \rm i} \subseteq E$ ($E_{v, \rm t} \subseteq E$) the set of edges for which $v$ is the initial vertex (terminal vertex) and by $\deg (v) = |E_{v, \rm i}| + |E_{v, \rm t}|$ the degree of $v$. Finally, by $L^2(\Gamma)$ we denote the usual $L^2$-space on $\Gamma$, which coincides with the direct sum $\bigoplus_{e\in E} L^2(0, L(e))$. For $f \in L^2 (\Gamma)$ we denote by $f_e$ the restriction of $f$ to some edge $e \in E$. Moreover, for $k = 1, 2, \dots$ we make use of the Sobolev spaces
\begin{align*}
 \widetilde H^k (\Gamma) := \bigoplus_{e \in E} H^k (0, L (e)),
\end{align*}
equipped with the standard Sobolev norms and inner products. Moreover, a function $f \in \widetilde H^1 (\Gamma)$ is called continuous at a vertex $v \in V$ if for any two edges $e, \hat e \in E$ incident to $v$ the values of $f_e$ and $f_{\hat e}$ at $v$ coincide.

In the following we consider Schr\"odinger operators in $L^2 (\Gamma)$ acting as 
\begin{align}\label{eq:Schroedinger}
 (\cL f)_e = - f_e'' + q_e f_e, \quad e \in E,
\end{align}
with a real-valued potential $q$; for simplicity, we assume that $q \in L^\infty(\Gamma)$. Sometimes we will impose mild sign conditions on $q$; cf.\ Remark \ref{rem:Potentials}.

In order to write down vertex conditions we make use of the following abbreviations. 
For any vertex $v \in V$ we fix enumerations $\{e_1, \dots, e_l\}$ and $\{e_{l + 1}, \dots e_{m} \}$ of $E_{v, \rm i}$ and $E_{v, \rm t}$, respectively, where $l = |E_{v, \rm i}|$ and $m = \deg (v)$. For each sufficiently regular $f \in L^2 (\Gamma)$ we write
\begin{align*}
 F (v) := \begin{pmatrix} f_{e_1} (0) \\ \vdots \\ f_{e_l} (0) \\ f_{e_{l + 1}} (L (e_{l + 1})) \\ \vdots \\ f_{e_{m}} (L (e_{m})) \end{pmatrix} \quad \text{and} \quad F' (v) := \begin{pmatrix} f'_{e_1} (0) \\ \vdots \\ f'_{e_l} (0) \\ - f'_{e_{l + 1}} (L (e_{l + 1})) \\ \vdots \\ - f'_{e_{m}} (L (e_{m})) \end{pmatrix}.
\end{align*}
Note that $F (v)$ is well-defined whenever $f \in \widetilde H^1 (\Gamma)$ and $F' (v)$ is well-defined for $f \in \widetilde H^2 (\Gamma)$. The latter denotes the collection of derivatives pointing out of $v$ into the edges. The following description of all self-adjoint incarnations of  $\cL$ in $L^2 (\Gamma)$ with local vertex conditions is standard, see, e.g.,~\cite[Theorem~1.4.4]{BK13}.

\begin{proposition}\label{prop:SAconditions}
Let $\Gamma$ be a finite, compact metric graph, let $q \in L^\infty (\Gamma)$ be real-valued and let $\cL$ be the Schr\"odinger differential expression in~\eqref{eq:Schroedinger}. For each vertex $v \in V$ let $P_{v, \rm D}, P_{v, \rm N}$ and $P_{v, \rm R}$ be orthogonal projections in $\C^{\deg (v)}$ with mutually orthogonal ranges such that $P_{v, \rm D} + P_{v, \rm N} + P_{v, \rm R} = I$ and let $\Lambda_v$ be a self-adjoint, invertible operator in $\ran P_{v, \rm R}$. Then the operator $H$ in $L^2 (\Gamma)$ given by
\begin{align*}
 H f & = \cL f, \\
 \dom H & = \Big\{ f \in \widetilde H^2 (\Gamma) : P_{v, \rm D} F (v) = 0, P_{v, \rm N} F' (v) = 0, \\
 & \qquad \qquad P_{v, \rm R} F' (v) = \Lambda_v P_{v, \rm R} F (v)~\text{for each}~v \in V \Big\},
\end{align*}
is self-adjoint (and each self-adjoint realization of $\cL$ in $L^2 (\Gamma)$ subject to local vertex conditions can be written in this form). Furthermore, the closed quadratic form $h$ corresponding to the operator $H$ is given by
\begin{align*}
 h [f] & = \int_\Gamma |f'|^2 \textup{d}x + \int_\Gamma q |f|^2 \textup{d}x + \sum_{v \in V} \big\langle \Lambda_v P_{v, \rm R} F (v), P_{v, \rm R} F (v) \big\rangle, \\
 \dom h & = \left\{ f \in \widetilde H^1 (\Gamma) : P_{v, \rm D} F (v) = 0~\text{for each}~v \in V \right\}.
\end{align*}
\end{proposition}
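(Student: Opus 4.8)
The plan is to establish self-adjointness and the quadratic-form identity in one stroke by arguing on the form side. I would first check that $h$, as written, is a densely defined, lower semibounded, \emph{closed} sesquilinear form; let $H_h$ denote the self-adjoint operator it represents via Kato's first representation theorem. I would then prove $H\subseteq H_h$, which together with self-adjointness of $H$ forces $H=H_h$ and hence gives both claims. (The bare assertion that $H$ is self-adjoint, and that every local self-adjoint realisation of $\cL$ arises in this form, is the classical parametrisation of Lagrangian subspaces of $\C^{2\deg v}$ and may be quoted from \cite[Theorem~1.4.4]{BK13}; I indicate below how it also falls out of the argument.)

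The starting point is Green's identity: integrating by parts on each interval $[0,L(e)]$ and regrouping the endpoint terms vertex by vertex, with the outward-derivative convention already built into $F'(v)$, one obtains, for $f,g\in\widetilde H^2(\Gamma)$,
\[
  \langle \cL f,g\rangle = \int_\Gamma f'\overline{g'}\,\textup{d}x + \int_\Gamma q f\overline g\,\textup{d}x + \sum_{v\in V}\big\langle F'(v),G(v)\big\rangle ,
\]
so that $\langle\cL f,g\rangle-\langle f,\cL g\rangle=\sum_{v\in V}\big(\langle F'(v),G(v)\rangle-\langle F(v),G'(v)\rangle\big)$, where $G(v),G'(v)$ are formed from $g$ as $F(v),F'(v)$ from $f$. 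For the closedness of $h$ the key input is an Ehrling-type trace estimate on $\widetilde H^1(\Gamma)$: for every $\eps>0$ there is $C_\eps$ with $|F(v)|^2\le\eps\|f'\|_{L^2(\Gamma)}^2+C_\eps\|f\|_{L^2(\Gamma)}^2$ for all $v\in V$ and $f\in\widetilde H^1(\Gamma)$, only finitely many edge endpoints being involved. Since $q\in L^\infty(\Gamma)$ and each $\Lambda_v$ is bounded, this shows that $h$ is bounded below, $h\ge-M$ say, and that $h[f]+M\|f\|^2$ is comparable to $\|f\|_{\widetilde H^1(\Gamma)}^2$ on $\dom h$. As $\dom h$ is the kernel of the bounded map $f\mapsto(P_{v,\rm D}F(v))_{v\in V}$ on $\widetilde H^1(\Gamma)$ it is closed there, hence complete in the form norm; and it is dense in $L^2(\Gamma)$ since it contains every function supported away from the vertices. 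Thus $h$ is a closed, semibounded, densely defined form and $H_h$ exists.

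To identify $H_h$ with $H$ I would first check $H\subseteq H_h$. For $f\in\dom H$ and $g\in\dom h$, substituting $P_{v,\rm D}F(v)=0$, $P_{v,\rm N}F'(v)=0$, $P_{v,\rm R}F'(v)=\Lambda_vP_{v,\rm R}F(v)$ and $P_{v,\rm D}G(v)=0$ into the boundary sum of Green's identity and using the mutual orthogonality of $\ran P_{v,\rm D},\ran P_{v,\rm N},\ran P_{v,\rm R}$ collapses that sum to $\sum_{v\in V}\langle\Lambda_vP_{v,\rm R}F(v),P_{v,\rm R}G(v)\rangle$, whence $\langle Hf,g\rangle=h[f,g]$ for all $g\in\dom h$; by the first representation theorem $f\in\dom H_h$ with $H_hf=Hf$. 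Combined with self-adjointness of $H$ this yields $H=H_h$, so $h$ is the form of $H$. The self-contained route for the reverse inclusion $H_h\subseteq H$ runs as follows: given $f\in\dom H_h$, testing the form identity against $g\in C_c^\infty(e)$ on each edge shows $-f_e''+q_ef_e\in L^2(0,L(e))$, hence $f\in\widetilde H^2(\Gamma)$ with $\cL f=H_hf$; feeding this back into Green's identity and using that the vertex conditions are imposed independently at the different vertices, i.e.\ taking $g\in\dom h$ supported near a single vertex $v$ and letting $G(v)$ range over $\ran P_{v,\rm N}$ and then over $\ran P_{v,\rm R}$, forces $P_{v,\rm N}F'(v)=0$ and $P_{v,\rm R}F'(v)=\Lambda_vP_{v,\rm R}F(v)$, so $f\in\dom H$.

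I expect the only genuinely nonroutine point to be the closedness in the second paragraph, where the vertex contributions $\langle\Lambda_vP_{v,\rm R}F(v),P_{v,\rm R}F(v)\rangle$ must be absorbed into the $\widetilde H^1$-norm with arbitrarily small relative weight via the trace inequality; the integration by parts and the vertex-wise linear algebra are standard, and the statement that every local self-adjoint realisation takes the stated form is the classical Lagrangian-subspace description of self-adjoint vertex conditions, which I would simply cite from \cite[Theorem~1.4.4]{BK13}.
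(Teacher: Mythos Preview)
Your proposal is correct, and in fact it supplies considerably more than the paper does: the paper does not prove this proposition at all but simply records it as ``standard'' and refers to \cite[Theorem~1.4.4]{BK13}. Your sketch---verifying that $h$ is closed, densely defined and semibounded via the trace inequality, producing $H_h$ by Kato's first representation theorem, and then identifying $H$ with $H_h$ through Green's identity and the vertexwise linear algebra---is the standard route to such statements and is carried out accurately; in particular your sign convention in Green's identity matches the paper's choice of $F'(v)$ as derivatives pointing from the vertex into the edges, so the boundary sum collapses correctly to $\sum_v\langle\Lambda_vP_{v,\rm R}F(v),P_{v,\rm R}G(v)\rangle$. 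The self-contained reverse inclusion $H_h\subseteq H$ you outline (test with compactly supported $g$ on each edge, then with $g$ localised near a single vertex and $G(v)$ ranging over $\ran P_{v,\rm N}$ and $\ran P_{v,\rm R}$) is also fine and indeed makes the citation to \cite{BK13} for self-adjointness of $H$ unnecessary, leaving only the parametrisation of \emph{all} self-adjoint local realisations to be quoted.
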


Recall that by a standard compact embedding argument the spectrum of the Hamiltonian $H$ on the compact graph $\Gamma$ is always purely discrete and bounded from below, see, e.g.,~\cite[Corollary 10 and Theorem 18]{Kuchment2004}. In the following we denote by
\begin{align*}
 \lambda_1 (H) \leq \lambda_2 (H) \leq \dots 
\end{align*}
the eigenvalues of $H$ in non-decreasing order, counted with multiplicities. We remark that all eigenvalues are non-negative if $q\geq 0$ and $\Lambda_v$ is non-negative for each vertex $v$. If $H$ acts as the Laplacian, i.e.\ $q = 0$ identically on $\Gamma$, and the vertex conditions are Kirchhoff conditions at every vertex then $\lambda_1 (H) = 0$ and the multiplicity of $\lambda_1 (H)$ coincides with the number of connected components of $\Gamma$.

The focus of this paper is on the behavior of the spectrum under a change of the graph, namely adding extra edges to a vertex or joining two vertices. For general vertex conditions there is no natural extension in the case that edges are added to a vertex. However, such an extension exists if the vertex conditions are {\em permutation invariant}, that is, they do not distinguish between the edges incident to the vertex. However, the only subspaces of $\C^{\deg (v)}$ being invariant under all permutations are $\{0\}$, $\C^{\deg (v)}$, $\spann \{ (1, 1, \dots, 1)^\top \}$ and $(\spann \{ (1, 1, \dots, 1)^\top \})^\perp$. Therefore in the following we assume that each of the orthogonal projections $P_{v, \rm D}$, $P_{v, \rm N}$ and $P_{v, \rm R}$ involved in the vertex conditions projects onto one of these subspaces. The orthogonal projections onto $\spann \{ (1, 1, \dots, 1)^\top \}$ and $(\spann \{ (1, 1, \dots, 1)^\top \})^\perp$ are given by the $d \times d$-matrices
\begin{align*}
 \cP:=\cP_d = \begin{pmatrix} \frac{1}{d} & \dots & \frac{1}{d} \\ \vdots & & \vdots \\ \frac{1}{d} & \dots & \frac{1}{d} \end{pmatrix} \quad \text{and} \quad \cQ:=\cQ_d = \begin{pmatrix} \frac{d - 1}{d} & - \frac{1}{d} & \dots & - \frac{1}{d} \\ - \frac{1}{d} & \ddots & \ddots & \vdots \\ \vdots & \ddots & \ddots & - \frac{1}{d} \\ - \frac{1}{d} & \dots & - \frac{1}{d} & \frac{d - 1}{d} \end{pmatrix},
\end{align*}
where $d := \deg (v)$. Moreover, in order to make the conditions permutation invariant we assume that $\Lambda_v$ is the multiplication by a constant. Note that, under these assumptions, if one of the projections is the identity (and, thus, the others are zero) then the vertex conditions do not reflect the connectivity of the graph but degenerate to decoupled Dirichlet, Neumann or Robin conditions. As we are not interested in this situation, we are left with the following assumption for the vertices to be changed. It comprises all permutation invariant vertex conditions that do not decouple the vertex.

\begin{hypothesis}\label{hyp:PQ}
For a given vertex $v \in V$ we assume that $P_{v, \rm D}, P_{v, \rm N}, P_{v, \rm R} \in \{0, \cP, \cQ\}$ such that $P_{v, \rm D}, P_{v, \rm N}$ and $P_{v, \rm R}$ are mutually distinct. Moreover, we assume that $\Lambda_v$ is the operator of multiplication by a constant in $\ran P_{v, \rm R}$.
\end{hypothesis}

Considering the vertex conditions in Proposition~\ref{prop:SAconditions} under the additional assumption of Hypothesis~\ref{hyp:PQ} leads to a total of six different classes of conditions. They are described in the following.

\begin{classification}\label{class}
Let Hypothesis~\ref{hyp:PQ} hold for some vertex $v \in V$. Then the vertex conditions at $v$ have one of the following forms.
\begin{enumerate}
	\item[I.] The first two cases correspond to $P_{v, \rm D} = \cQ$.
	\begin{enumerate}
	 \item[(a)] If $P_{v, \rm N} = \cP$ and $P_{v, \rm R} = 0$ we get {\em Kirchhoff conditions}
	\begin{align*}
	 f~\text{is continuous at}~v \quad \text{and} \quad \sum_{j = 1}^{\deg (v)} F_j' (v) = 0.
	\end{align*}
	\item[(b)]	For $P_{v, \rm N} = 0$, $P_{v, \rm R} = \cP$ and $\Lambda_v$ acting as multiplication by $\frac{\alpha_v}{\deg (v)}$ for some real $\alpha_v \neq 0$ we have {\em $\delta$-conditions}
	\begin{align*}
	 f~\text{is continuous at}~v \quad \text{and} \quad \sum_{j = 1}^{\deg (v)} F_j' (v) = \alpha_v f (v).
	\end{align*}
	\end{enumerate}
	
	\item[II.] The next two cases are $P_{v, \rm N} = \cQ$. 
	\begin{enumerate}
	 \item If $P_{v, \rm D} = \cP$ and $\cP_{v, \rm R} = 0$ we have conditions which are known as {\em anti-Kirchhoff}, namely, the vector $F' (v)$ is constant and
	\begin{align*}
	 \sum_{j = 1}^{\deg (v)} F_j (v) = 0.
	\end{align*}
	 \item If $P_{v, \rm D} = 0$, $P_{v, \rm R} = \cP$ and $\Lambda_v$ is multiplication by $\frac{\deg (v)}{\beta_v}$ for some real $\beta_v \neq 0$ we get {\em $\delta'$-type conditions}, i.e., the vector $F' (v)$ is constant (let $f' (v)$ denote an arbitrary component of it) and
	\begin{align*}
	 \sum_{j = 1}^{\deg (v)} F_j (v) = \beta_v f' (v).
	\end{align*}
	\end{enumerate}
	
	\item[III.] The remaining cases correspond to $P_{v, \rm R} = \cQ$.
	\begin{enumerate}
		\item[(a)] For $P_{v, \rm D} = \cP$, $P_{v, \rm N} = 0$ and $\Lambda_v$ being multiplication with real $C_v \neq 0$ the conditions can be written as
	\begin{align*}
	 \sum_{j = 1}^{\deg (v)} F_j (v) = 0 \quad \text{and} \quad F_j' (v) - F_k' (v) = C_v (F_j (v) - F_k (v))
	\end{align*}
	for all $j, k \in \{1, \dots, \deg (v)\}$.
	\item[(b)] If $P_{v, \rm D} = 0$, $P_{v, \rm N} = \cP$ and $\Lambda_v$ is multiplication by $\frac{1}{D_v}$ for a real $D_v \neq 0$ we get
	\begin{align*}
	 \sum_{j = 1}^{\deg (v)} F_j' (v) = 0 \quad \text{and} \quad F_j (v) - F_k (v) = D_v (F_j' (v) - F_k' (v))
	\end{align*}
	for all $j, k \in \{1, \dots, \deg (v)\}$.
	\end{enumerate}
\end{enumerate}
\end{classification}

\begin{remark}
The conditions of type III in Classification~\ref{class} appear less frequently in the literature. However, for instance the conditions III~(b) were proposed in~\cite{E96} as an alternative to the $\delta'$-type conditions II~(b) in the description of quantum particles on graphs, and their physical properties were discussed there.
\end{remark}

As all proofs in the following sections will be based on calculations involving quadratic forms, we provide the following lemma. Its proof is a simple calculation and is left to the reader.

\begin{lemma}\label{lem:forms}
Let $\Gamma$ be a finite, compact metric graph, let $H$ be a self-adjoint Schr\"odinger operator in $L^2 (\Gamma)$ as in Proposition~\ref{prop:SAconditions}, and let $h$ be the corresponding quadratic form. Furthermore, let $v \in V$ and let the vertex conditions for $H$ at $v$ be given in terms of $P_{v, \rm D}, P_{v, \rm N}, P_{v, \rm R}$ and $\Lambda_v$. Assume that Hypothesis~\ref{hyp:PQ} holds for~$v$. Then the following assertions hold for each $f \in \dom h$, where the types refer to Classification~\ref{class}.
\begin{enumerate}
  \item If the conditions at $v$ are of type I~(a) then $f$ is continuous at $v$ and
		\begin{align*}
	 \big\langle \Lambda_v P_{v, \rm R} F (v), P_{v, \rm R} F (v) \big\rangle
 = 0.
\end{align*}
	\item If the conditions at $v$ are of type I~(b) then $f$ is continuous at $v$ and
	\begin{align*}
	 \big\langle \Lambda_v P_{v, \rm R} F (v), P_{v, \rm R} F (v) \big\rangle
 = \alpha_v |f (v)|^2.
\end{align*}
  \item If the conditions at $v$ are of type II~(a) then $\sum_{j = 1}^{\deg (v)} F_j (v) = 0$ and
		\begin{align*}
	 \big\langle \Lambda_v P_{v, \rm R} F (v), P_{v, \rm R} F (v) \big\rangle
 = 0.
\end{align*}
	\item If the conditions at $v$ are of type II~(b) then $f$ does not satisfy any vertex conditions at $v$ and
	\begin{align*}
	 \big\langle \Lambda_v P_{v, \rm R} F (v), P_{v, \rm R} F (v) \big\rangle
 = \frac{1}{\beta_v} \bigg| \sum_{j = 1}^{\deg (v)} F_j (v) \bigg|^2.
\end{align*}
  \item If the conditions at $v$ are of type III~(a) then $\sum_{j = 1}^{\deg (v)} F_j (v) = 0$ and
		\begin{align*}
	 \big\langle \Lambda_v P_{v, \rm R} F (v), P_{v, \rm R} F (v) \big\rangle
 = C_v \sum_{j = 1}^{\deg (v)} |F_j (v)|^2.
\end{align*}
	\item If the conditions at $v$ are of type III~(b) then $f$ does not satisfy any vertex conditions at $v$ and
	\begin{align*}
	 \big\langle \Lambda_v P_{v, \rm R} F (v), P_{v, \rm R} F (v) \big\rangle
 & = \frac{1}{D_v} \bigg( \sum_{j = 1}^{\deg (v)} |F_j (v)|^2 - \frac{1}{\deg (v)} \bigg| \sum_{j = 1}^{\deg (v)} F_j (v) \bigg|^2 \bigg).
\end{align*}
\end{enumerate}
\end{lemma}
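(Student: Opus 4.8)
The plan is to verify each of the six identities in Lemma~\ref{lem:forms} by a direct computation, using the characterizations of the vertex conditions from Classification~\ref{class} together with the explicit form of the matrices $\cP = \cP_d$ and $\cQ = \cQ_d$ with $d = \deg(v)$. The starting point in every case is the expression $\langle \Lambda_v P_{v,\rm R} F(v), P_{v,\rm R} F(v)\rangle$ appearing in the quadratic form $h$ from Proposition~\ref{prop:SAconditions}. Since $\Lambda_v$ acts as multiplication by a scalar $c_v$ on $\ran P_{v,\rm R}$ (by Hypothesis~\ref{hyp:PQ}) and $P_{v,\rm R}$ is an orthogonal projection, this term simplifies to $c_v \|P_{v,\rm R} F(v)\|^2 = c_v \langle P_{v,\rm R} F(v), F(v)\rangle$, where $c_v$ takes the appropriate value from Classification~\ref{class} ($\frac{\alpha_v}{d}$, $\frac{d}{\beta_v}$, $C_v$, or $\frac{1}{D_v}$) and is irrelevant when $P_{v,\rm R} = 0$. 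It then remains to compute $\langle P_{v,\rm R} F(v), F(v)\rangle$ for $P_{v,\rm R} \in \{0, \cP, \cQ\}$, and to read off the membership statement for $f$ in $\dom h$ from whether $P_{v,\rm D}$ equals $0$, $\cP$, or $\cQ$.

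The two elementary building blocks are: for any $x = (x_1,\dots,x_d)^\top \in \C^d$,
\begin{align*}
 \langle \cP x, x\rangle = \frac{1}{d}\Big|\sum_{j=1}^d x_j\Big|^2
 \quad\text{and}\quad
 \langle \cQ x, x\rangle = \sum_{j=1}^d |x_j|^2 - \frac{1}{d}\Big|\sum_{j=1}^d x_j\Big|^2,
\end{align*}
the second following from $\cQ = I - \cP$. Applying these with $x = F(v)$ handles the form term in all six cases. For the $\dom h$ condition one uses that $\dom h$ imposes exactly $P_{v,\rm D} F(v) = 0$: if $P_{v,\rm D} = \cQ$ this says $F(v) \in \spann\{(1,\dots,1)^\top\}$, i.e.\ $f$ is continuous at $v$ (cases I~(a) and I~(b)); if $P_{v,\rm D} = \cP$ this says $\sum_{j=1}^d F_j(v) = 0$ (cases II~(a) and III~(a)); and if $P_{v,\rm D} = 0$ there is no condition at $v$ (cases II~(b) and III~(b)). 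Combining: in I~(a), $P_{v,\rm R}=0$ so the term vanishes; in I~(b), $P_{v,\rm R}=\cP$ and continuity gives $\sum_j F_j(v) = d\,f(v)$, so $\langle \Lambda_v\cP F(v),\cP F(v)\rangle = \frac{\alpha_v}{d}\cdot\frac{1}{d}\cdot d^2|f(v)|^2 = \alpha_v|f(v)|^2$; in II~(a), $P_{v,\rm R}=0$; in II~(b), $P_{v,\rm R}=\cP$ gives $\frac{d}{\beta_v}\cdot\frac{1}{d}|\sum_j F_j(v)|^2 = \frac{1}{\beta_v}|\sum_j F_j(v)|^2$; in III~(a), $P_{v,\rm R}=\cQ$ and $\sum_j F_j(v)=0$ collapses $\langle\cQ F(v),F(v)\rangle$ to $\sum_j|F_j(v)|^2$, giving $C_v\sum_j|F_j(v)|^2$; in III~(b), $P_{v,\rm R}=\cQ$ with no simplification, yielding $\frac{1}{D_v}\big(\sum_j|F_j(v)|^2 - \frac{1}{d}|\sum_j F_j(v)|^2\big)$.

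There is essentially no obstacle here — the whole lemma is bookkeeping — but the one point requiring mild care is cross-checking that the scalar multipliers attached to $\Lambda_v$ in Classification~\ref{class} are exactly the ones that produce the clean right-hand sides above, in particular the cancellation of the factor $d = \deg(v)$ in cases I~(b) and II~(b). One should also note that the Dirichlet-type constraint $P_{v,\rm D}F(v)=0$ is the only constraint surviving in $\dom h$ (the Neumann and Robin projections $P_{v,\rm N}, P_{v,\rm R}$ act on derivatives, which are not controlled in $\widetilde H^1(\Gamma)$), which is why in cases II~(b) and III~(b) — where $P_{v,\rm D}=0$ — the function $f$ genuinely satisfies no vertex condition at $v$. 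Assembling these observations gives the six assertions; the details are the "simple calculation" already flagged as left to the reader.
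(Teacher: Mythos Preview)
Your proposal is correct and is precisely the direct verification the paper has in mind; indeed the paper does not give a proof at all but states that ``its proof is a simple calculation and is left to the reader.'' Your case-by-case computation of $\langle \cP F(v), F(v)\rangle$ and $\langle \cQ F(v), F(v)\rangle$, combined with reading off the $\dom h$ constraint from $P_{v,\rm D}$, is exactly that calculation.
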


\section{Attaching edges}
\label{sec:attaching_edges}

In this section we study the question how the spectrum of a graph Hamiltonian changes when we attach additional edges (or, more generally, whole graphs) to certain vertices of a given graph. Earlier this question was studied for the Laplacian with Kirchhoff vertex conditions and the first positive eigenvalue (the spectral gap) in~\cite{KMN13}. We provide two theorems depending on the conditions at those vertices where the additional edge or graph is attached. In order to avoid making the presentation over-complicated, in the theorems of this section we treat only the case of the Laplacian, i.e., the negative second derivative with zero potentials on the edges and discuss only adding one edge, either connecting two vertices of the original graph or a vertex of the original graph and a new vertex. We then discuss the general case of Schr\"odinger operators as well as attaching whole graphs in Remark~\ref{rem:Potentials} below.

The notion in the following definition will be used below when graph transformations lead to an increase of the degree of a vertex.

\begin{definition}
At a graph vertex $v$ of degree $d$ let vertex conditions satisfying Hypothesis~\ref{hyp:PQ} be given, that is, $P_{v, \rm D}, P_{v, \rm N}, P_{v, \rm R} \in \{0, \cP_d, \cQ_d\}$ are distinct and $\Lambda_v$ acts as multiplication with a constant in $\ran P_{v, \rm R}$. Then the {\em natural extension} of these conditions to a vertex $\widetilde v$ of degree $\widetilde d > d$ is obtained by replacing $\cP_d$ by $\cP_{\widetilde d}$ and $\cQ_d$ by $\cQ_{\widetilde d}$ and letting $\Lambda_{\widetilde v}$ be the multiplication operator in $\ran P_{\widetilde v, \rm R}$ with the constant corresponding to the same interaction strength as for $\Lambda_v$; i.e.\ multiplication with $\frac{\alpha_v}{\widetilde d}$ in case I~(b), with $\frac{\widetilde d }{\beta_v}$ in case II~(b), with $C_v$ in case III~(a) and with $\frac{1}{D_v}$ in case III~(b).
\end{definition}

We point out that proceeding from vertex conditions satisfying Hypothesis~\ref{hyp:PQ} to their natural extensions does not change the type of the conditions according to Classification~\ref{class}.

The following theorem deals with adding an edge connecting two vertices $v_1$ and $v_2$ of a graph. The admissible vertex conditions include $\delta'$-type and anti-Kirchhoff conditions.

\begin{theorem}\label{thm:monotonicity}
Let $\Gamma$ be a finite, compact metric graph, let $v_1, v_2$ be two distinct vertices of $\Gamma$ and let $H$ be the Laplacian in $L^2 (\Gamma)$ subject to arbitrary local, self-adjoint vertex conditions at each vertex $v \in V \setminus \{v_1, v_2\}$, see Proposition~\ref{prop:SAconditions}, and having at each of the vertices $v_1$ and $v_2$ conditions either of type II (a) or II (b) or III (a) according to Classification~\ref{class} (not necessarily the same type at $v_1$ and~$v_2$). Let $\widetilde \Gamma$ be the graph obtained from $\Gamma$ by adding an extra edge of an arbitrary, finite length connecting $v_1$ and $v_2$, see Figure~\ref{fig:THMmonotonicity}. Moreover, let $\widetilde H$ be the Laplacian in $L^2 (\widetilde \Gamma)$ having the same vertex conditions as $H$ on all $v \in V \setminus \{v_1, v_2\}$ and with the natural extension of the vertex conditions for $H$ at $v_1$ and $v_2$. Then
\begin{align*}
  \lambda_k (\widetilde H) \leq \lambda_k (H)
\end{align*}
holds for all $k \in \N$.
\end{theorem}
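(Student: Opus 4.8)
The plan is to use the min-max (variational) characterization of eigenvalues together with a test-function argument. Write $h$ for the quadratic form of $H$ on $L^2(\Gamma)$ and $\widetilde h$ for that of $\widetilde H$ on $L^2(\widetilde\Gamma)$; denote the new edge by $e_0$ with length $\ell$, so $L^2(\widetilde\Gamma) = L^2(\Gamma)\oplus L^2(0,\ell)$. By the min-max principle,
\begin{align*}
 \lambda_k(\widetilde H) = \min_{\substack{W\subseteq \dom\widetilde h\\ \dim W = k}} \max_{0\neq f\in W} \frac{\widetilde h[f]}{\|f\|^2_{L^2(\widetilde\Gamma)}},
\end{align*}
and similarly for $\lambda_k(H)$. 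The idea is: given any $k$-dimensional subspace $W\subseteq\dom h$ that (nearly) realizes $\lambda_k(H)$, I want to lift each $f\in W$ to $\widetilde f\in\dom\widetilde h$ by extending $f$ onto the new edge $e_0$ in such a way that (a) the map $f\mapsto\widetilde f$ is linear and injective, so its image $\widetilde W$ is again $k$-dimensional; (b) $\widetilde h[\widetilde f] \le h[f]$; and (c) $\|\widetilde f\|_{L^2(\widetilde\Gamma)}^2 \ge \|f\|_{L^2(\Gamma)}^2$. Combining (a)--(c) with min-max immediately gives $\lambda_k(\widetilde H)\le\lambda_k(H)$ for every $k$.

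The crucial construction is the extension onto $e_0$. The key observation, provided by Lemma~\ref{lem:forms}, is that for vertex conditions of types II~(a), II~(b) and III~(a) the form domain imposes \emph{no} continuity constraint linking $e_0$ to the rest of $\widetilde\Gamma$ at $v_1$ or $v_2$ (in case II~(a) there is the single linear constraint $\sum_j F_j(v)=0$, in cases II~(b) and III~(a) the relevant quantity $\sum_j F_j(v)$ appears only inside the form); moreover the vertex contribution to $\widetilde h$ at $v_i$ involves only the sum $\sum_j F_j(v_i)$ (types II~(a), II~(b)) or $\sum_j |F_j(v_i)|^2$ together with $\sum_j F_j(v_i)$ (type III~(a)). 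Therefore the natural choice is to extend $f$ by the constant function equal to $0$ on $e_0$; then $\widetilde f_{e_0}\equiv 0$ contributes nothing to the edge integral $\int_{\widetilde\Gamma}|\widetilde f'|^2$, contributes nothing to $\|\widetilde f\|^2$ beyond $\|f\|^2$ (so (c) holds with equality), and, crucially, adds a zero term to the vertex sums $\sum_j F_j(v_i)$, so the vertex contributions at $v_1,v_2$ are unchanged --- in case III~(a) the extra summand $C_{v_i}|F_{e_0}(v_i)|^2 = 0$ as well. One must check that $\widetilde f$ lies in $\dom\widetilde h$: for types II~(b) and III~(a) no constraint is violated (no Dirichlet-type projection, or $P_{\widetilde v,\rm D}=\cP_{\widetilde d}$ still only constrains the sum which is unchanged), and for type II~(a) the constraint $\sum_j F_j(\widetilde v_i)=0$ for $\widetilde f$ reduces to $\sum_j F_j(v_i)=0$ for $f$, which holds since $f\in\dom h$. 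Hence $\widetilde h[\widetilde f] = h[f]$ and $\|\widetilde f\|^2 = \|f\|^2$, giving the inequality with room to spare.

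The main technical point to get right --- and the place where the restriction to types II~(a), II~(b), III~(a) is essential --- is the verification that extension by zero respects the natural extension of the vertex conditions and does not increase the form, in contrast to Kirchhoff or $\delta$-conditions (type I), where continuity at $v_i$ would force $\widetilde f_{e_0}(v_i)=f(v_i)$ and one cannot take the zero extension for free. I would handle all three admissible types uniformly by invoking Lemma~\ref{lem:forms} to rewrite the vertex terms of $\widetilde h[\widetilde f]$ explicitly and observing termwise that each equals the corresponding term of $h[f]$ once $\widetilde f_{e_0}\equiv 0$. A final routine remark: the same argument works verbatim if $v_1=v_2$ were allowed (adding a loop) or if several edges are added, and, as promised in the text, the conditions at the unchanged vertices $v\in V\setminus\{v_1,v_2\}$ play no role since the form contributions there are literally identical for $h$ and $\widetilde h$ on the common part of the domain. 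I expect no real obstacle beyond carefully matching the vertex-term bookkeeping; the heart of the matter is simply choosing the zero extension and reading off Lemma~\ref{lem:forms}.
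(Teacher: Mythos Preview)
Your proposal is correct and follows essentially the same approach as the paper: extend functions by zero on the new edge, verify via Lemma~\ref{lem:forms} that this preserves membership in the form domain and leaves the vertex contributions at $v_1,v_2$ unchanged, and conclude via min-max. One minor slip in your exposition: for type III~(a) the form domain \emph{does} carry the constraint $\sum_j F_j(v)=0$ (since $P_{v,\rm D}=\cP$), not merely a contribution inside the form---but you handle this correctly later when you note that the zero extension preserves the sum.
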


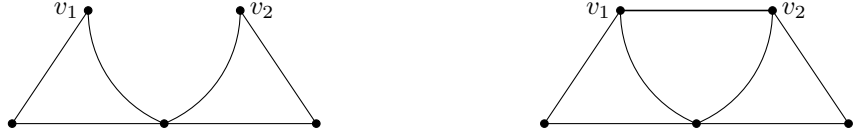
\begin{figure}[htb]
  \centering
  \begin{tikzpicture}
    \draw[fill] (0,0) circle(0.05);
    \draw[fill] (2,0) circle(0.05);
    \draw[fill] (1,1.5) circle(0.05) node[left]{$v_1$};
    \draw[fill] (4,0) circle(0.05);
    \draw[fill] (3,1.5) circle(0.05) node[right]{$v_2$};
    \draw (1,1.5)--(0,0)--(2,0);
    \draw (2,0)--(4,0)--(3,1.5);
    \draw (1,1.5) arc(180:247.38:1.625);
    \draw (3,1.5) arc(0:-67.38:1.625);
    \begin{scope}[shift={(7,0)}]
      \draw[fill] (0,0) circle(0.05);
      \draw[fill] (2,0) circle(0.05);
      \draw[fill] (1,1.5) circle(0.05) node[left]{$v_1$};
      \draw[fill] (4,0) circle(0.05);
      \draw[fill] (3,1.5) circle(0.05) node[right]{$v_2$};
      \draw (1,1.5)--(0,0)--(2,0);
      \draw (2,0)--(4,0)--(3,1.5);
      \draw (1,1.5) arc(180:247.38:1.625);
      \draw (3,1.5) arc(0:-67.38:1.625);
      \draw[semithick] (1,1.5)--(3,1.5);
    \end{scope}
  \end{tikzpicture}
  \caption{The transformation of Theorem \ref{thm:monotonicity}. Left: $\Gamma$, right: $\widetilde{\Gamma}$, i.e.\ attaching an edge connecting $v_1$ and $v_2$.}
  \label{fig:THMmonotonicity}
\end{figure}

\begin{proof}
Let us denote by $h$ and $\widetilde h$ the quadratic forms on $L^2 (\Gamma)$ and $L^2 (\widetilde \Gamma)$ corresponding to the operators $H$ and $\widetilde H$, respectively, see Proposition~\ref{prop:SAconditions}. Let $k \in \N$ and let $\cF$ be a $k$-dimensional subspace of $\dom h$ such that 
\begin{align*}
 h [f] \leq \lambda_k (H) \int_\Gamma |f|^2 \textup{d}x \quad \text{for all}~f \in \cF.
\end{align*}
For each $f \in \cF$ denote by $\widetilde f$ the extension of $f$ by zero to $\widetilde \Gamma$. Then the space $\widetilde \cF$ formed by these extensions is $k$-dimensional, and $\widetilde \cF \subset \dom \widetilde h$ as the conditions for $h$ at $v$ carry over to the corresponding conditions for $\widetilde h$. Moreover, for each $\widetilde f \in \widetilde \cF$ we have
\begin{align}\label{eq:achso}
\begin{split}
 \widetilde h [\widetilde f] & = \int_\Gamma |f'|^2 \textup{d}x + \sum_{v \in V \setminus \{v_1, v_2\}} \big\langle \Lambda_v P_{v, \rm R} F (v), P_{v, \rm R} F (v) \big\rangle \\
& \qquad + \big\langle \widetilde \Lambda_{v_1} \widetilde P_{v_1, \rm R} \widetilde F (v_1), \widetilde P_{v_1, \rm R} \widetilde F (v_1) \big\rangle + \big\langle \widetilde \Lambda_{v_2} \widetilde P_{v_2, \rm R} \widetilde F (v_2), \widetilde P_{v_2, \rm R} \widetilde F (v_2) \big\rangle.
\end{split}
\end{align}
Let us look at the term for $v_1$ in more detail. Our aim is to show
\begin{align}\label{eq:aha}
 \big\langle \widetilde \Lambda_{v_1} \widetilde P_{v_1, \rm R} \widetilde F (v_1), \widetilde P_{v_1, \rm R} \widetilde F (v_1) \big\rangle = \big\langle \Lambda_{v_1} P_{v_1, \rm R} F (v_1), P_{v_1, \rm R} F (v_1) \big\rangle.
\end{align}
Indeed, if the conditions at $v_1$ are of type II~(a) then $P_{v_1, \rm R} = 0$ and $\widetilde P_{v_1, \rm R} = 0$ so that~\eqref{eq:aha} follows. If the conditions at $v_1$ are of type II~(b) then by Lemma~\ref{lem:forms}~(iv)
\begin{align*}
 \big\langle \widetilde \Lambda_v \widetilde P_{v_1, \rm R} \widetilde F (v_1), \widetilde P_{v_1, \rm R} \widetilde F (v_1) \big\rangle & = \frac{1}{\beta_{v_1}} \bigg| \sum_{j = 1}^{d + 1} \widetilde F_j (v) \bigg|^2 = \frac{1}{\beta_{v_1}} \bigg| \sum_{j = 1}^{d} F_j (v) \bigg|^2 \\
 & = \big\langle \Lambda_{v_1} P_{v_1, \rm R} F (v_1), P_{v_1, \rm R} F (v_1) \big\rangle,
\end{align*}
where $d$ is the degree of $v_1$ in $\Gamma$. This is~\eqref{eq:aha}. Finally, if the conditions at $v_1$ are of type III~(a) then Lemma~\ref{lem:forms}~(v) gives
\begin{align*}
 \big\langle \widetilde \Lambda_v \widetilde P_{v_1, \rm R} \widetilde F (v_1), \widetilde P_{v_1, \rm R} \widetilde F (v_1) \big\rangle & = C_{v_1} \sum_{j = 1}^{d + 1} \big|\widetilde F_j (v) \big|^2 = C_{v_1} \sum_{j = 1}^{d} \big|F_j (v) \big|^2 \\
 & = \big\langle \Lambda_{v_1} P_{v_1, \rm R} F (v_1), P_{v_1, \rm R} F (v_1) \big\rangle,
\end{align*}
which is again~\eqref{eq:aha}.
Finally, combining~\eqref{eq:achso} with~\eqref{eq:aha} and its analogous counterpart for $v_1$ replaced by $v_2$ we get 
\begin{align*}
 \widetilde h [\widetilde f] = h [f] \leq \lambda_k (H) \int_\Gamma |f|^2 \textup{d}x = \lambda_k (H) \int_{\widetilde \Gamma} |\widetilde f|^2 \textup{d}x \quad \text{for all}~\widetilde f \in \widetilde \cF,
\end{align*}
which by the min-max principle implies the assertion of the theorem.
\end{proof}

The following example shows that Theorem~\ref{thm:monotonicity} may fail for other conditions satisfying Hypothesis~\ref{hyp:PQ}. For Kirchhoff conditions this example was discussed in~\cite[Example 1]{KMN13}. 

\begin{example}\label{ex:KHdelta}
Let $\Gamma$ be the graph with two vertices $v_1, v_2$ and one edge of length~$1$ connecting these two, see Figure~\ref{fig:monotonicity}. 
Let $H$ be the Laplacian in $L^2 (\Gamma)$ with a $\delta$-condition of strength $\alpha \in \R$ at $v_1$ and a Kirchhoff condition at $v_2$. 
Note that both vertices have degree one; hence, the condition at $v_1$ is a Robin boundary condition and the one at $v_2$ is Neumann. 
Note further that for $\alpha = 0$ the condition at $v_1$ is Neumann, too. 
Moreover, let $\widetilde \Gamma$ be the graph obtained from $\Gamma$ by adding another edge of length $\ell > 0$ that also connects $v_1$ to $v_2$, and let $\widetilde H$ be the Laplacian in $L^2 (\widetilde \Gamma)$ subject to the natural extensions of the conditions for $H$, namely a $\delta$-condition of strength $\alpha$ at $v_1$ and a Kirchhoff condition at $v_2$. We look at two different cases.

  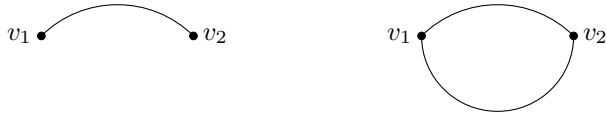
\begin{figure}[htb]
    \centering
    \begin{tikzpicture}
      \draw[fill] (0,0) circle(0.05);
      \draw[fill] (2,0) circle(0.05);
      \draw (0,0) node[left]{$v_1$};
      \draw (2,0) node[right]{$v_2$};
      \draw (2,0) arc(45:135:1.41421356237);
      \begin{scope}[shift={(5,0)}]
	\draw[fill] (0,0) circle(0.05);
	\draw[fill] (2,0) circle(0.05);
	\draw (0,0) node[left]{$v_1$};
	\draw (2,0) node[right]{$v_2$};
	\draw (2,0) arc(45:135:1.41421356237);
	\draw (2,0) arc(0:-180:1);
      \end{scope}
    \end{tikzpicture}     
    \caption{The graphs in Example~\ref{ex:KHdelta}. Left: $\Gamma$, right: $\widetilde{\Gamma}$.}
  \label{fig:monotonicity}
  \end{figure}

If $\alpha = 0$ (see~\cite[Example 1]{KMN13}) then $\lambda_k(H) = \pi^2(k-1)^2$ for all $k\in\N$, and $\widetilde{H}$ is unitarily equivalent to the Laplacian on an interval of length $1+\ell$ with periodic boundary conditions, which implies $\lambda_1(\widetilde{H}) = 0$ and $\lambda_{2k}(\widetilde{H}) = \lambda_{2k+1}(\widetilde{H}) = \frac{4\pi^2}{(1+\ell)^2} k^2$ for all $k\in\N$. 
Hence for $\ell<1$ we observe $\lambda_2(\widetilde{H}) > \lambda_2(H)$, whereas for $\ell\geq1$ we obtain $\lambda_k(\widetilde{H}) \leq \lambda_k(H)$ for all $k\in\N$ (and even a strict inequality for $k>2$; for $\ell>1$ the inequality is strict also for $k=2$).

If $\alpha = 1$ then simple calculations yield $\lambda_1(H) \approx 0.74017$. Furthermore, letting $\ell = 0.1$ one obtains $\lambda_1 (\widetilde{H}) \approx 0.83156$, that is, $\lambda_1 (H) < \lambda_1 (\widetilde H)$. Hence, adding an edge may increase the eigenvalues also for $\alpha \neq 0$.
\end{example}

We would like to point out that we did not find an example for conditions of type III~(b) violating the assertion of Theorem~\ref{thm:monotonicity}. We provide a further example that shows that the inequality in Theorem \ref{thm:monotonicity} can be strict for all $k \in \N$ simultaneously.

\begin{example}
Let $\Gamma$ and $\widetilde \Gamma$ be as in the previous example, where the edge length in $\Gamma$ is $1$ and the edge lengths in $\widetilde \Gamma$ are $1$ and $\ell > 0$, and let $H$ and $\widetilde H$ be the Laplacians in $L^2 (\Gamma)$ and $L^2 (\widetilde \Gamma)$, respectively, with type II~(a) (anti-Kirchhoff) coupling conditions at both $v_1$ and $v_2$. For $\Gamma$ this results in Dirichlet boundary conditions and, hence, $\lambda_k(H) = \pi^2k^2$ for all $k\in\N$. 
Moreover, $\widetilde{H}$ is unitarily equivalent to the Laplacian $\widehat{H}$ on the interval $[0, 1+\ell]$ with conditions $f (1-) + f (1+) = 0$, $-f'(1-) = f'(1+)$, $f(0)+ f(1+\ell) = 0$ and $f'(0) = - f'(1+\ell)$. Hence, $f$ is an eigenfunction of $\widehat{H}$ if and only if $g$ defined by $g = f$ on $[0,\ell)$ and $g = -f$ on $(\ell,1+\ell]$ is an eigenfunction of the Laplacian on $[0,1+\ell]$ with periodic boundary conditions.
  We conclude that $\lambda_1(\widetilde{H}) = 0$ and
  $\lambda_{2k}(\widetilde{H}) = \lambda_{2k+1}(\widetilde{H}) = \frac{4\pi^2}{(1+\ell)^2} k^2$ for all $k\in\N$.
  Thus, $\lambda_k(\widetilde{H}) < \lambda_k(H)$ for all $k\in\N$, independent of the choice of $\ell$.
\end{example}

In the next theorem we show that adding an edge connecting an old vertex to a new vertex of degree one does in most cases lead to a non-increase of all eigenvalues. For the special case of Kirchhoff vertex conditions this was discussed in~\cite[Theorem~2]{KMN13}, see also~\cite[Proposition~3.1]{R17}. For the sake of completeness we indicate the proof also for this case.

\begin{theorem}\label{thm:monotonicity2}
Let $\Gamma$ be a finite, compact metric graph, let $v_1$ be a vertex of $\Gamma$ and let $H$ be the Laplacian in $L^2 (\Gamma)$ subject to arbitrary local, self-adjoint vertex conditions at each vertex $v \in V \setminus \{v_1\}$, see Proposition~\ref{prop:SAconditions}, and with a condition satisfying Hypothesis~\ref{hyp:PQ} at $v_1$. 
Let $\widetilde \Gamma$ be the graph obtained from $\Gamma$ by adding an extra edge of an arbitrary, finite length connecting $v_1$ with a new vertex $v_2$ (i.e., $v_2 \notin V$ and $v_2$ has degree one in $\widetilde \Gamma$), see Figure \ref{fig:THMmonotonicity2}. 
Moreover, let $\widetilde H$ be the Laplacian in $L^2 (\widetilde \Gamma)$ having the same vertex conditions as $H$ on all $v \in V \setminus \{v_1\}$, with the natural extension of the vertex conditions for $H$ at $v_1$ and with any self-adjoint, local conditions at $v_2$. 
If the conditions at $v_1$ are of type I~(a), I~(b) or III~(b) we assume in addition that the condition of $\widetilde H$ at $v_2$ is a $\delta$ (i.e.\ Robin) condition with $\alpha_{v_2} \leq 0$. Then
\begin{align*}
  \lambda_k (\widetilde H) \leq \lambda_k (H) 
\end{align*}
holds for all $k \in \N$. 
\end{theorem}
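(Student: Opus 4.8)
The plan is to use the same variational strategy as in Theorem~\ref{thm:monotonicity}: produce from a $k$-dimensional ``good'' subspace for $H$ a $k$-dimensional subspace for $\widetilde H$ on which the Rayleigh quotient does not exceed $\lambda_k(H)$, and then invoke the min--max principle. Concretely, I would fix $k\in\N$ and choose a $k$-dimensional subspace $\cF\subseteq\dom h$ with $h[f]\le\lambda_k(H)\int_\Gamma|f|^2\,\textup{d}x$ for all $f\in\cF$. Each $f\in\cF$ is extended to $\widetilde\Gamma$ by a function $\widetilde f$ which equals $f$ on $\Gamma$ and is defined on the new edge $e_0$ (of length $\ell$, say) as the \emph{constant} equal to the value that $f$ takes on the edges at $v_1$ when $f$ is continuous there. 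The point of using a constant extension is twofold: it contributes nothing to the Dirichlet term $\int_{e_0}|\widetilde f'|^2$, and it makes $\widetilde f$ lie in $\dom\widetilde h$ for the natural extension of the vertex condition at $v_1$.

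The core computation is to compare, term by term, $\widetilde h[\widetilde f]$ with $h[f]$. The Dirichlet integral over $\Gamma$ and the vertex terms at all $v\in V\setminus\{v_1\}$ are unchanged. On $e_0$ the Dirichlet contribution is $0$, and the vertex contribution at the new vertex $v_2$ (for a $\delta$-condition of strength $\alpha_{v_2}$) is $\alpha_{v_2}|\widetilde f(v_2)|^2=\alpha_{v_2}|c|^2$ where $c$ is the constant boundary value; since $\alpha_{v_2}\le 0$ this is $\le 0$. It remains to check that the vertex term at $v_1$ for $\widetilde h$ (using $\widetilde P_{v_1,\mathrm R}$, $\widetilde\Lambda_{v_1}$ on $\C^{d+1}$) is at most the vertex term at $v_1$ for $h$, where one must split by type using Lemma~\ref{lem:forms}. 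In the continuous cases I~(a), I~(b) and III~(b) one has $\widetilde F(v_1)=(F_1(v_1),\dots,F_d(v_1),c)^\top$ with $c=F_1(v_1)=\dots=F_d(v_1)$ (all equal), so $\sum_{j=1}^{d+1}\widetilde F_j(v_1)=\tfrac{d+1}{d}\sum_{j=1}^{d}F_j(v_1)$ and $\sum_{j=1}^{d+1}|\widetilde F_j(v_1)|^2=\tfrac{d+1}{d}\sum_{j=1}^d|F_j(v_1)|^2$; plugging these into the formulas of Lemma~\ref{lem:forms}~(i),(ii),(vi) shows that the $v_1$-term is \emph{multiplied by a factor $\tfrac{d+1}{d}>1$} (and in case I~(a) it is simply $0$). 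Thus for $\alpha_{v_1}>0$, $D_{v_1}>0$ the $v_1$-term could increase — and this is exactly why the extra assumption $\alpha_{v_2}\le0$ is imposed: the nonpositive $\delta$-term at $v_2$ compensates. More precisely, I would choose $c$ so as to absorb the surplus; in fact for the continuous types the cleanest choice is still $c=F_1(v_1)$ and one checks that the increase at $v_1$ plus the (nonpositive) term at $v_2$ sums to something $\le$ the original $v_1$-term when $\alpha_{v_2}\le0$ — for I~(a) there is nothing to compensate, and for I~(b), III~(b) one verifies the inequality by an elementary estimate using $\alpha_{v_2}\le 0$ together with a suitable (possibly $\ell$- or $d$-dependent) rescaling of the constant. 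For the remaining types II~(a), II~(b), III~(a) the boundary values $F_j(v_1)$ need not be equal; here one instead extends by the constant $c=\tfrac1d\sum_{j=1}^d F_j(v_1)$, so that $\sum_{j=1}^{d+1}\widetilde F_j(v_1)=\sum_{j=1}^d F_j(v_1)$ is preserved exactly and, because adding the mean minimizes the sum of squares, $\sum_{j=1}^{d+1}|\widetilde F_j(v_1)|^2=\sum_{j=1}^d|F_j(v_1)|^2+|c|^2$ only grows in a controlled way; again Lemma~\ref{lem:forms} gives that the $v_1$-term for $\widetilde h$ is $\le$ the one for $h$ (for II~(a) it is $0$, for II~(b) it is unchanged, and for III~(a) one uses $C_{v_1}\sum|\widetilde F_j|^2$ which one must then balance against the $v_2$-term — these are precisely the types for which \emph{no} sign assumption on $\alpha_{v_2}$ is needed, which matches the statement).

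Having verified $\widetilde h[\widetilde f]\le h[f]$ for every $\widetilde f$ in the $k$-dimensional space $\widetilde\cF$ of extensions, and noting $\int_{\widetilde\Gamma}|\widetilde f|^2\,\textup{d}x=\int_\Gamma|f|^2\,\textup{d}x+\ell|c|^2\ge\int_\Gamma|f|^2\,\textup{d}x$ (so the Rayleigh quotient only decreases from the denominator as well, when the numerator is nonnegative — one should be slightly careful if eigenvalues can be negative, but since a nonincrease of the numerator and nondecrease of the denominator is exactly what one wants once $h[f]\le\lambda_k(H)\int_\Gamma|f|^2$, the inequality $\widetilde h[\widetilde f]\le\lambda_k(H)\int_{\widetilde\Gamma}|\widetilde f|^2\,\textup{d}x$ follows in all cases where $\lambda_k(H)\ge0$; in general one argues directly that $\widetilde h[\widetilde f]\le h[f]\le\lambda_k(H)\int_\Gamma|f|^2\le\lambda_k(H)\int_{\widetilde\Gamma}|\widetilde f|^2$, using $\lambda_k(H)\le 0$ is not the generic case but can be handled by the sign hypothesis on $q$ mentioned in Remark~\ref{rem:Potentials}), the min--max principle yields $\lambda_k(\widetilde H)\le\lambda_k(H)$ for all $k$.

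The step I expect to be the main obstacle is the bookkeeping at $v_1$ combined with the compensation from $v_2$ in the continuous cases I~(a), I~(b), III~(b): one has to make the right choice of the constant value on the new edge (and perhaps exploit that $\ell$ can be any positive number, or simply work with the value $c=F(v_1)$ and show the $\delta$-term at $v_2$ with $\alpha_{v_2}\le0$ dominates the surplus $\tfrac1d\big\langle\Lambda_{v_1}P_{v_1,\mathrm R}F(v_1),P_{v_1,\mathrm R}F(v_1)\big\rangle$, which for $\alpha_{v_1}>0$ need not be true pointwise unless one rescales). The cleanest resolution is probably: for types I and III~(b), take the new-edge function to be the affine interpolation between the value $F(v_1)$ at $v_1$ and $0$ at $v_2$ when $\alpha_{v_2}<0$, paying a Dirichlet cost $|F(v_1)|^2/\ell$ that can be made as small as desired by taking... no — $\ell$ is fixed; so instead take the constant $c=F(v_1)$ and note that in the continuous cases the natural extension's $v_1$-term equals $\tfrac{d+1}{d}$ times the old one, while $\alpha_{v_2}\le0$ contributes $\alpha_{v_2}|F(v_1)|^2\le0$; for I~(b) with $\alpha_{v_1}>0$ one then needs $\alpha_{v_2}\le-\tfrac{\alpha_{v_1}}{d(d+1)}\cdot\tfrac{\|F(v_1)\|^2}{|F(v_1)|^2}$, which is false in general, so in fact one must instead not extend by a constant but choose the new-edge profile to make $\widetilde f(v_2)$ whatever sign-and-size is needed — in practice one takes $\widetilde f$ on $e_0$ to be the unique harmonic (affine) function with the value $F(v_1)$ at $v_1$ and value $t$ at $v_2$, optimizes over $t$, and checks the resulting quadratic in $t$ has minimum $\le h[f]$ precisely under $\alpha_{v_2}\le0$. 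Carrying out that one-variable optimization carefully, type by type, is the technical heart of the proof.
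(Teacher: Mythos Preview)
Your overall plan (choose a min--max subspace $\cF$ for $h$, extend each $f\in\cF$ to $\widetilde\Gamma$ by a constant on the new edge, compare $\widetilde h[\widetilde f]$ with $h[f]$) is the paper's plan, but your execution of the key step --- the bookkeeping at $v_1$ --- is wrong in every case, and this creates all the artificial difficulties you then try to repair.

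For types I~(a) and I~(b) you claim the $v_1$-term is multiplied by $\tfrac{d+1}{d}$. It is not. The natural extension of a $\delta$-condition lets $\Lambda_{v_1}$ act as multiplication by $\tfrac{\alpha_{v_1}}{d+1}$ on $\ran\cP_{d+1}$, so by Lemma~\ref{lem:forms}~(ii) the term is $\alpha_{v_1}|\widetilde f(v_1)|^2=\alpha_{v_1}|f(v_1)|^2$ --- \emph{exactly} the old value. The $v_2$-term $\alpha_{v_2}|f(v_1)|^2\le 0$ is then simply discarded; no compensation, interpolation, or one-variable optimization is needed. For type III~(b) you classify the condition as ``continuous''; it is not ($P_{v_1,\mathrm D}=0$ there). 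The paper extends by the constant $c=\tfrac1d\sum_{j=1}^d F_j(v_1)$ and a short direct computation (carried out in the paper's proof) shows the III~(b) term from Lemma~\ref{lem:forms}~(vi) is again preserved \emph{exactly}. Finally, for types II~(a), II~(b), III~(a) you propose the same average extension and assert that then $\sum_{j=1}^{d+1}\widetilde F_j(v_1)=\sum_{j=1}^d F_j(v_1)$; but $\sum_{j=1}^{d+1}\widetilde F_j(v_1)=\sum_{j=1}^d F_j(v_1)+c$, which equals the old sum only if $c=0$. The paper (as in Theorem~\ref{thm:monotonicity}) extends by zero in these cases, which preserves both $\sum F_j$ and $\sum|F_j|^2$ and, incidentally, sidesteps your worry about $\lambda_k(H)<0$ since then $\int_{\widetilde\Gamma}|\widetilde f|^2=\int_\Gamma|f|^2$.

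In short: once you recall that the natural extension rescales $\Lambda_{v_1}$ with the degree, the right constant (namely $f(v_1)$ for type~I, the mean $\tfrac1d\sum F_j(v_1)$ for type~III~(b), and $0$ for types~II and~III~(a)) makes the $v_1$-term exactly invariant, and the whole proof becomes the two-line argument in the paper.
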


\begin{figure}[htb]
  \centering
  \begin{tikzpicture}
    \draw[fill] (0,0) circle(0.05);
    \draw[fill] (2,0) circle(0.05);
    \draw[fill] (1,1.5) circle(0.05) node[left]{$v_1$};
    \draw[fill] (4,0) circle(0.05);
    \draw[fill] (3,1.5) circle(0.05);
    \draw (1,1.5)--(0,0)--(2,0);
    \draw (2,0)--(4,0)--(3,1.5);
    \draw (1,1.5) arc(180:247.38:1.625);
    \draw (3,1.5) arc(0:-67.38:1.625);
    \begin{scope}[shift={(7,0)}]
      \draw[fill] (0,0) circle(0.05);
      \draw[fill] (2,0) circle(0.05);
      \draw[fill] (1,1.5) circle(0.05) node[left]{$v_1$};
      \draw[fill] (4,0) circle(0.05);
      \draw[fill] (3,1.5) circle(0.05);
      \draw (1,1.5)--(0,0)--(2,0);
      \draw (2,0)--(4,0)--(3,1.5);
      \draw (1,1.5) arc(180:247.38:1.625);
      \draw (3,1.5) arc(0:-67.38:1.625);
      \draw[fill] (2,2) circle(0.05) node[right]{$v_2$};
      \draw[semithick] (1,1.5)--(2,2);
    \end{scope}
  \end{tikzpicture}
  \caption{The transformation of Theorem \ref{thm:monotonicity2}. Left: $\Gamma$, right: $\widetilde{\Gamma}$, i.e.\ adding an extra edge connecting $v_1$ and a new vertex $v_2$.}
  \label{fig:THMmonotonicity2}
\end{figure}
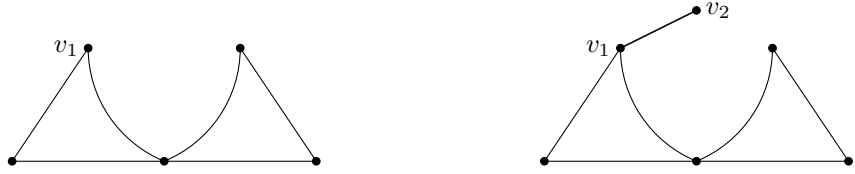

\begin{proof}
If the condition at $v_1$ is of type II~(a), II~(b) or III~(a) then the proof is literally the same as for Theorem~\ref{thm:monotonicity}, and the result is independent of the boundary condition at the new vertex $v_2$. For the remaining cases let $k \in \N$ and let $\cF$ be as in the proof of Theorem~\ref{thm:monotonicity}. If the condition at $v_1$ is of type~I (Kirchhoff or $\delta$) then each $f \in \cF$ is continuous at $v_1$ and we define $\widetilde f$ to be the extension of $f$ to $\widetilde \Gamma$ having the constant value $f (v_1)$ on the new edge, and $\widetilde \cF$ to be the collection of all such extensions of $f \in \cF$. As the condition of $\widetilde H$ at $v_2$ is $\delta$, it follows $\widetilde \cF \subset \dom \widetilde h$. Moreover, by Lemma~\ref{lem:forms}~(i) or~(ii),
\begin{align}\label{eq:nullmal}
\begin{split}
 \big\langle \widetilde \Lambda_{v_1} \widetilde P_{v_1, \rm R} \widetilde F (v_1), \widetilde P_{v_1, \rm R} \widetilde F (v_1) \big\rangle & = \alpha_{v_1} |\widetilde f (v_1)|^2 = \alpha_{v_1} |f (v_1)|^2 \\
& = \big\langle \Lambda_{v_1} P_{v_1, \rm R} F (v_1), P_{v_1, \rm R} F (v_1) \big\rangle
\end{split}
\end{align}
holds for the coefficient $\alpha_{v_1} \in \R$ of the condition at $v_1$. Moreover, at $v_2$ we have
\begin{align}\label{eq:einmal}
 \big\langle \widetilde \Lambda_{v_2} \widetilde P_{v_2, \rm R} \widetilde F (v_2), \widetilde P_{v_2, \rm R} \widetilde F (v_2) \big\rangle = \alpha_{v_2} |\widetilde f (v_2)|^2 \leq 0.
\end{align}
We plug~\eqref{eq:nullmal} and~\eqref{eq:einmal} into~\eqref{eq:achso}, which remains valid in this situation, and obtain 
\begin{align}\label{eq:naBitte}
 \widetilde h [\widetilde f] \leq h [f] \leq \lambda_k (H) \int_\Gamma |f|^2 \textup{d}x \leq \lambda_k (H) \int_{\widetilde \Gamma} |\widetilde f|^2 \textup{d}x \quad \text{for all}~\widetilde f \in \widetilde \cF.
\end{align}
If the condition at $v_2$ is of type III~(b) then for each $f \in \cF$ we denote by $\widetilde f$ the extension to $\widetilde \Gamma$ being constantly equal to
\begin{align*}
 \frac{1}{d} \sum_{j = 1}^{d} F_j (v_1)
\end{align*}
on the new edge, where $d$ is the degree of $v_1$ in $\Gamma$. As above, we denote by $\widetilde \cF$ the space of these extensions of all functions in $\cF$. Then by Lemma~\ref{lem:forms}~(vi)
\begin{align*}
 & \big\langle \widetilde \Lambda_{v_1} \widetilde P_{v_1, \rm R} \widetilde F (v_1), \widetilde P_{v_1, \rm R} \widetilde F (v_1) \big\rangle 
 = \frac{1}{D_{v_1}} \bigg( \sum_{j = 1}^{d + 1} |\widetilde F_j (v_1)|^2 - \frac{1}{d + 1} \bigg| \sum_{j = 1}^{d + 1} \widetilde F_j (v_1) \bigg|^2 \bigg) \\
 & = \frac{1}{D_{v_1}} \bigg( \sum_{j = 1}^{d} |F_j (v_1)|^2 + \frac{1}{d^2} \bigg| \sum_{j = 1}^{d} F_j (v_1) \bigg|^2 - \frac{1}{d + 1} \bigg| \sum_{j = 1}^{d} F_j (v_1) + \frac{1}{d} \sum_{j = 1}^{d} F_j (v_1) \bigg|^2 \bigg) \\
 & = \frac{1}{D_{v_1}} \bigg( \sum_{j = 1}^d |F_j (v_1)|^2 - \frac{1}{d} \bigg|\sum_{j = 1}^{d} F_j (v_1)\bigg|^2 \bigg) \\
 & = \big\langle \Lambda_{v_1} P_{v_1, \rm R} F (v_1), P_{v_1, \rm R} F (v_1) \big\rangle.
\end{align*}
Together with
\begin{align*}
 \big\langle \widetilde \Lambda_{v_2} \widetilde P_{v_2, \rm R} \widetilde F (v_2), \widetilde P_{v_2, \rm R} \widetilde F (v_2) \big\rangle = \alpha_{v_2} |\widetilde f (v_1)|^2 \leq 0
\end{align*}
and~\eqref{eq:achso} we arrive again at~\eqref{eq:naBitte}. As $\dim \widetilde \cF = k$, the min-max principle implies the assertion of the theorem.
\end{proof}

The following example shows that in case we impose a Robin boundary condition with a positive coefficient or a Dirichlet boundary condition at the new vertex $v_2$, the assertion of the previous theorem may be false. 

\begin{example}
\label{ex:attach_new_vertex}
Let $\Gamma$ be the graph consisting of two vertices $v_0$ and $v_1$ and one edge connecting the two vertices, which is parametrised by the interval $(0,1)$; cf.~Figure~\ref{fig:attach_new_vertex}.
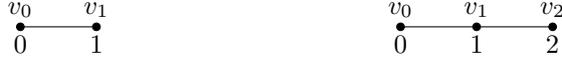
\begin{figure}[htb]
  \centering
  \begin{tikzpicture}
    \draw[fill] (0,0) circle(0.05) node[above]{$v_0$};
    \draw (0,0) node[below]{$0$};
    \draw[fill] (1,0) circle(0.05) node[above]{$v_1$};
    \draw (1,0) node[below]{$1$};  
    \draw (0,0)--(1,0);
    \begin{scope}[shift={(5,0)}]
     \draw[fill] (0,0) circle(0.05) node[above]{$v_0$};
      \draw (0,0) node[below]{$0$};
      \draw[fill] (1,0) circle(0.05) node[above]{$v_1$};
      \draw (1,0) node[below]{$1$}; 
      \draw (0,0)--(1,0);
      \draw[fill] (2,0) circle(0.05) node[above]{$v_2$};
      \draw (2,0) node[below]{$2$};
      \draw (1,0)--(2,0);
    \end{scope}
  \end{tikzpicture}
  \caption{The graphs in Example \ref{ex:attach_new_vertex}. Left: $\Gamma$, right: $\widetilde{\Gamma}$.}
  \label{fig:attach_new_vertex}
\end{figure}
Let us impose Kirchhoff conditions at $v_0$ and $v_1$. Since the degree is one in both cases, these conditions correspond to Neumann boundary conditions for $H$ at $0$ and at $1$. Hence, $\lambda_k (H) = (k-1)^2\pi^2$ for all $k\in\N$. Now, let us add an edge of length one, which connects $v_1$ with $v_2$, see Figure \ref{fig:attach_new_vertex}.
At the vertex $v_2$ we impose either the condition $f' (2) + \alpha f (2) = 0$ for some $\alpha > 0$ or a Dirichlet condition. Then the spectrum of $\widetilde H$ is nonnegative and it is easy to see by solving the respective boundary value problem that $\lambda_1 (\widetilde H) \neq 0$. Hence
\begin{align*}
 \lambda_1 (\widetilde H) > 0 = \lambda_1 (H),
\end{align*}
which shows that the assertion of Theorem~\ref{thm:monotonicity2} is not valid here.
\end{example}

Let us add some concluding remarks on generalizations of the results of this section.

\begin{remark}\label{rem:Potentials}
\begin{enumerate}
 \item[(a)] As the proofs show, the statements of Theorem~\ref{thm:monotonicity} and Theorem~\ref{thm:monotonicity2} remain true if the Laplacian on $\Gamma$ is replaced by a Schr\"odinger operator with a real-valued potential $q\in L^\infty(\Gamma)$. Furthermore, the statement Theorem~\ref{thm:monotonicity} remains valid if an arbitrary bounded, measurable, real-valued potential is introduced on the new edge, and the same is true for Theorem~\ref{thm:monotonicity2} if the conditions at $v_1$ are of type II~(a), II~(b) or III~(a). In the remaining cases of Theorem~\ref{thm:monotonicity2} it is easy to see that the statement remains true if the potential $q_{\widetilde e}$ on the new edge $\widetilde e$ satisfies 
 \begin{align*}
  \int_0^{L (\widetilde e)} q_{\widetilde e} \, \textup{d}x \leq 0.
 \end{align*}
 \item[(b)] Instead of just adding one edge in Theorem \ref{thm:monotonicity}, the result remains valid when one attaches a whole compact metric graph to a subset $V_0 \subset V$ provided the conditions at each vertex of $V_0$ have one of the types II~(a), II~(b) or III~(a). Also in the situation of Theorem \ref{thm:monotonicity2} we can add a whole compact metric graph to one vertex, but in that case appropriate assumptions on the vertex conditions of the attached graph need to be imposed. We do not discuss this here in more detail.
\end{enumerate}
\end{remark}

\section{Joining vertices}
\label{sec:joining_vertices}

In this section we study the behavior of eigenvalues when joining two vertices (with the same type of conditions) and merging their coupling conditions appropriately. In the following we say that two vertices $v_1$, $v_2$ of a graph are {\em joined} to one vertex $v_0$ if $v_1$ and $v_2$ are removed from $\Gamma$ and, instead, a vertex $v_0$ with
\begin{align*}
 E_{v_0, \rm i}:=E_{v_1, \rm i}\cup E_{v_2, \rm i} \quad \text{and} \quad E_{v_0, \rm t} := E_{v_1, \rm t}\cup E_{v_2, \rm t}
\end{align*}
is introduced. We point out that the process of joining two vertices does not affect the edge set of a metric graph. In particular, each function on the original graph can be identified naturally with a function on the new graph where $v_1$ and $v_2$ are joined to $v_0$, and vice versa. In particular, we will identify the spaces $L^2 (\Gamma)$ and $L^2 (\widetilde \Gamma)$ if $\widetilde \Gamma$ was obtained from $\Gamma$ by joining two vertices.

\begin{figure}[htb]
  \centering
  \begin{tikzpicture}
    \draw[fill] (0,0) circle(0.05);
    \draw[fill] (2,0) circle(0.05);
    \draw[fill] (1,1.5) circle(0.05) node[left]{$v_1$};
    \draw[fill] (4,0) circle(0.05);
    \draw[fill] (3,1.5) circle(0.05);
    \draw[fill] (3,1.5) circle(0.05) node[right]{$v_2$};
    \draw (1,1.5)--(0,0)--(2,0);
    \draw (2,0)--(4,0)--(3,1.5);
    \draw (1,1.5) arc(180:247.38:1.625);
    \draw (3,1.5) arc(0:-67.38:1.625);
    \begin{scope}[shift={(7,0)}]
      \draw[fill] (0,0) circle(0.05);
      \draw[fill] (2,0) circle(0.05);
      \draw[fill] (1,1.5) circle(0.05) node[above]{$v_0$};
      \draw (1,1.5)--(0,0)--(2,0);
      \draw (1,1.5) arc(180:247.38:1.625);
      \begin{scope}[rotate around={67.38:(2,0)}]
	\draw (3,1.5) arc(0:-67.38:1.625);
	\draw[fill] (4,0) circle(0.05);
	\draw[fill] (3,1.5) circle(0.05);
	\draw (2,0)--(4,0)--(3,1.5);
      \end{scope}
    \end{scope}
  \end{tikzpicture}
  \caption{The transformation of Theorems \ref{thm:joining}, \ref{thm:joining_delta'} and~\ref{thm:joining_ExnerConditions}}. Left: $\Gamma$, right: $\widetilde{\Gamma}$, i.e.\ joining the vertices $v_1$ and $v_2$ to a new vertex $v_0$.
  \label{fig:THMjoining}
\end{figure}

We start with the case of conditions of type I, i.e., Kirchhoff or $\delta$-conditions. This situation was treated in~\cite[Theorem~2]{KKT16}. For completeness we include its simple proof here. We allow nonzero potentials on the edges as well as arbitrary self-adjoint conditions at the vertices that are not changed, which does not complicate the proof.

\begin{theorem}
\label{thm:joining}
Let $\Gamma$ be a finite, compact metric graph and let $H$ be a Schr\"odinger operator in $L^2 (\Gamma)$ with real-valued potential $q\in L^\infty(\Gamma)$ and local, self-adjoint vertex conditions at the vertices; cf.~Proposition~\ref{prop:SAconditions}. Assume that $v_1, v_2$ are two distinct vertices of $\Gamma$ and that the vertex conditions of $H$ at $v_1$ and $v_2$ are of type~I according to Classification~\ref{class}, i.e.\ of $\delta$-type with coefficients $\alpha_{v_1}, \alpha_{v_2} \in \R$ (coefficient zero corresponds to a Kirchhoff condition). Denote by $\widetilde \Gamma$ the graph obtained from $\Gamma$ by joining $v_1$ and $v_2$ to form one single vertex $v_0$. Let $\widetilde H$ be the self-adjoint Schr\"odinger operator in $L^2 (\widetilde \Gamma)$ with potential $q$ having the same vertex conditions as $H$ at all vertices apart from $v_0$ and satisfying a $\delta$-type condition with coefficient $\alpha_{v_0} :=\alpha_{v_1} + \alpha_{v_2}$ at $v_0$. Then
\begin{align}\label{eq:yeah}
 \lambda_k (H) \leq \lambda_k (\widetilde H)
\end{align}
holds for all $k\in\N$.
\end{theorem}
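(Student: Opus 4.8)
The plan is to use the min-max (Courant--Fischer) principle in the direction opposite to the one used in Theorem~\ref{thm:monotonicity}: here we want $\lambda_k(H) \le \lambda_k(\widetilde H)$, so we should produce, from a suitable test space for $\widetilde H$, a test space of the same dimension for $H$ on which the Rayleigh quotient of $h$ is controlled by $\lambda_k(\widetilde H)$. The key structural observation is that joining $v_1$ and $v_2$ into $v_0$ \emph{shrinks} the form domain: a function continuous at $v_0$ (in the graph $\widetilde\Gamma$) is automatically continuous at both $v_1$ and $v_2$ when read back on $\Gamma$, so $\dom \widetilde h \subseteq \dom h$ under the natural identification of $L^2(\Gamma)$ and $L^2(\widetilde\Gamma)$. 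The $L^2$-norms and the Dirichlet-plus-potential part $\int_\Gamma |f'|^2 + q|f|^2$ of the two forms are literally identical on this common domain, since the edge set is unchanged.

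The main point is then to compare the vertex terms. First I would fix $k \in \N$ and pick a $k$-dimensional subspace $\widetilde\cF \subseteq \dom\widetilde h$ with $\widetilde h[f] \le \lambda_k(\widetilde H)\int_{\widetilde\Gamma}|f|^2$ for all $f \in \widetilde\cF$, as furnished by the min-max principle. Viewing $\widetilde\cF$ as a subspace of $\dom h$, it remains a $k$-dimensional space. For each $f \in \widetilde\cF$ I would compare $h[f]$ with $\widetilde h[f]$: the two differ only in the vertex contributions at $v_1, v_2$ versus $v_0$. Since $f$ is continuous at $v_0$, it takes a single value $f(v_0)$ there, hence $f(v_1) = f(v_2) = f(v_0)$ as a function on $\Gamma$; by Lemma~\ref{lem:forms}~(i),(ii) the relevant terms are $\alpha_{v_1}|f(v_1)|^2 + \alpha_{v_2}|f(v_2)|^2 = (\alpha_{v_1}+\alpha_{v_2})|f(v_0)|^2 = \alpha_{v_0}|f(v_0)|^2$, which is exactly the vertex term for $\widetilde h$ at $v_0$. (A Kirchhoff condition is the case $\alpha = 0$ and contributes nothing, consistent with the fact that $\alpha_{v_1}=\alpha_{v_2}=0$ forces $\alpha_{v_0}=0$.) Therefore $h[f] = \widetilde h[f]$ for all $f \in \widetilde\cF$, and so $h[f] \le \lambda_k(\widetilde H)\int_\Gamma|f|^2$ on this $k$-dimensional subspace of $\dom h$. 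The min-max principle for $H$ then yields $\lambda_k(H) \le \lambda_k(\widetilde H)$.

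There is essentially no hard analytic obstacle here; the proof is a one-line form identity combined with a domain inclusion, which is why the authors call it ``simple''. The only thing that genuinely requires care is the bookkeeping of the vertex terms: one must check that joining $v_1$ and $v_2$ does not alter the conditions at other vertices (it does not, by construction), that the values $F(v_1)$, $F(v_2)$ really do merge into a single $F(v_0)$ with the stated coefficient sum, and that the continuity constraint at $v_0$ is \emph{more} restrictive than the pair of continuity constraints at $v_1, v_2$ — making $\dom\widetilde h$ a subspace of $\dom h$, which is the correct direction for obtaining $\lambda_k(H) \le \lambda_k(\widetilde H)$ rather than the reverse inequality. One should also note that the identification of $L^2(\Gamma)$ with $L^2(\widetilde\Gamma)$ is exact (same edges, same lengths), so no metric distortion enters the Rayleigh quotients.
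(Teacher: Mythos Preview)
Your proof is correct and follows essentially the same approach as the paper's: establish the form-domain inclusion $\dom\widetilde h\subset\dom h$ via the continuity constraint at $v_0$, then use Lemma~\ref{lem:forms}~(i),(ii) to check that the vertex contributions at $v_1,v_2$ sum exactly to the one at $v_0$, giving $h[f]=\widetilde h[f]$ on $\dom\widetilde h$. The paper phrases this as a direct form inequality $h\le\widetilde h$ rather than explicitly invoking a $k$-dimensional test space, but the content is identical.
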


\begin{proof}
Let $h$ and $\widetilde h$ be the quadratic forms corresponding to the operators $H$ and $\widetilde H$, respectively (see Proposition \ref{prop:SAconditions}). For the inequality~\eqref{eq:yeah} it suffices to show inequality of the quadratic forms, i.e., $\dom \widetilde h \subset \dom h$ and $h [f] \leq \widetilde h [f]$ for all $f \in \dom \widetilde h$. In fact, each function in $\dom \widetilde h$ is continuous at each vertex of $\widetilde \Gamma$ which clearly implies continuity at each vertex of $\Gamma$. In particular, each $f \in \dom \widetilde h$ satisfies $f (v_0) = f (v_1) = f (v_2)$, and by Lemma~\ref{lem:forms}~(i) or~(ii) we get
\begin{align*}
 \widetilde h [f] - h [f] & = \alpha_{v_0} |f (v_0)|^2 - \alpha_{v_1} |f (v_1)|^2 - \alpha_{v_2} |f (v_2)|^2 = 0,
\end{align*}
which leads to the assertion.
\end{proof}

In the next theorem we show that joining two vertices of type II can have a different effect on the eigenvalues, depending on the signs of the coupling coefficients. Recall that $f$ satisfies conditions of type~II at some vertex $v$ if $F' (v)$ is a constant vector (let us call an arbitrary component $f' (v)$)  and
\begin{align}\label{eq:typeII}
 \sum_{j = 1}^{\deg (v)} F_j (v) = \beta_v f' (v).
\end{align}
These are $\delta'$-type conditions for $\beta_v \neq 0$ and anti-Kirchhoff conditions for $\beta_v = 0$. 

\begin{theorem}\label{thm:joining_delta'}
Let $\Gamma$ be a finite, compact metric graph and let $H$ be a Schr\"odinger operator in $L^2 (\Gamma)$ with real-valued potential $q\in L^\infty(\Gamma)$ and local, self-adjoint vertex conditions at the vertices; cf.~Proposition~\ref{prop:SAconditions}. Assume that $v_1, v_2$ are two distinct vertices of $\Gamma$ and that each of the vertex conditions of $H$ at $v_1$ and $v_2$ is of type II according to Classification~\ref{class}, with coefficients $\beta_{v_1}, \beta_{v_2} \in \R$. Denote by $\widetilde \Gamma$ the graph obtained from $\Gamma$ by joining $v_1$ and $v_2$ to form one single vertex $v_0$ (see Figure~\ref{fig:THMjoining}) and let $\widetilde H$ be the self-adjoint Schr\"odinger operator in $L^2 (\widetilde \Gamma)$ with potential $q$, having the same vertex conditions as $H$ at all vertices apart from $v_0$ and satisfying conditions of the form~\eqref{eq:typeII} at $v = v_0$ with coefficient $\beta_{v_0} := \beta_{v_1} + \beta_{v_2}$ at $v_0$. Then the following assertions hold.
\begin{enumerate}
 \item If $\beta_{v_1},\beta_{v_2} > 0$ then $\lambda_k (\widetilde H) \leq \lambda_k (H)$ for all $k \in \N$.
 \item If $\beta_{v_1},\beta_{v_2} < 0$ then $\lambda_k (H) \leq \lambda_k (\widetilde H)$ for all $k \in \N$.
 \item If $\beta_{v_1}\cdot \beta_{v_2} < 0$ and $\beta_{v_0} > 0$ then $\lambda_k (H) \leq \lambda_k (\widetilde H)$ for all $k \in \N$.
 \item If $\beta_{v_1}\cdot \beta_{v_2} < 0$ and $\beta_{v_0} < 0$ then $\lambda_k (\widetilde H) \leq \lambda_k (H)$ for all $k \in \N$.
 \item If $\beta_{v_1}\cdot \beta_{v_2} < 0$ and $\beta_{v_0} = 0$ then $\lambda_k (H) \leq \lambda_k (\widetilde H)$ for all $k \in \N$.
 \item If $\beta_{v_1} \cdot \beta_{v_2} = 0$ then $\lambda_k (\widetilde H) \leq \lambda_k (H)$ for all $k \in \N$.
\end{enumerate}
\end{theorem}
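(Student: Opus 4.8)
The proof will, as in the previous theorems, rest on comparing the quadratic forms $h$ and $\widetilde h$ associated with $H$ and $\widetilde H$ and then invoking the min-max principle. Since joining two vertices does not alter the edge set, we have $L^2(\Gamma) = L^2(\widetilde\Gamma)$ and $\widetilde H^1(\Gamma) = \widetilde H^1(\widetilde\Gamma)$, and the vector $F(v_0)$ for $\widetilde\Gamma$ is the concatenation of the vectors $F(v_1)$ and $F(v_2)$ for $\Gamma$; the ordering of the entries is irrelevant since type~II conditions are permutation invariant. Throughout write
\begin{align*}
 S_i := \sum_{j=1}^{\deg(v_i)} F_j(v_i) \qquad (i = 1, 2),
\end{align*}
so that the corresponding sum of boundary values at $v_0$ equals $S_1 + S_2$. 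By Proposition~\ref{prop:SAconditions} together with Lemma~\ref{lem:forms}~(iii),(iv), the forms $h$ and $\widetilde h$ differ only through the terms located at $v_1, v_2$ (for $h$) resp.\ $v_0$ (for $\widetilde h$): at a vertex with coefficient $\beta\ne0$ the form picks up the summand $\frac{1}{\beta}|\,\cdot\,|^2$ of the respective sum of boundary values, whereas at a vertex with coefficient $\beta=0$ the form domain carries the constraint that this sum vanishes (so that the summand is then $0$).

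The computational core is the elementary identity, valid whenever $\beta_{v_1}, \beta_{v_2}$ and $\beta_{v_0}=\beta_{v_1}+\beta_{v_2}$ are all nonzero,
\begin{align*}
 \frac{1}{\beta_{v_0}}\,|S_1 + S_2|^2 - \frac{1}{\beta_{v_1}}\,|S_1|^2 - \frac{1}{\beta_{v_2}}\,|S_2|^2 = -\,\frac{\beta_{v_1}\beta_{v_2}}{\beta_{v_0}}\,\left|\frac{S_1}{\beta_{v_1}} - \frac{S_2}{\beta_{v_2}}\right|^2,
\end{align*}
which is verified by expanding the right-hand side and using $\beta_{v_0}=\beta_{v_1}+\beta_{v_2}$. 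In cases (i)--(iv) all three coefficients $\beta_{v_1}, \beta_{v_2}, \beta_{v_0}$ are nonzero, so neither $h$ nor $\widetilde h$ imposes a constraint at $v_1, v_2$ resp.\ $v_0$; hence $\dom h = \dom\widetilde h$, and for every $f$ in this common domain $\widetilde h[f] - h[f]$ equals the left-hand side of the identity, whose sign is that of $-\beta_{v_1}\beta_{v_2}/\beta_{v_0}$. This quantity is $\le 0$ precisely in cases (i) and (iv) and $\ge0$ precisely in cases (ii) and (iii). Combined with the equality of form domains, the min-max principle yields $\lambda_k(\widetilde H)\le\lambda_k(H)$ in cases (i), (iv) and $\lambda_k(H)\le\lambda_k(\widetilde H)$ in cases (ii), (iii).

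It remains to handle the two configurations in which the form domains differ. In case (v) we have $\beta_{v_1}=-\beta_{v_2}\ne0$ and $\beta_{v_0}=0$, so $\widetilde h$ carries the constraint $S_1+S_2=0$ while $h$ imposes none at $v_1, v_2$; thus $\dom\widetilde h\subset\dom h$, and on $\dom\widetilde h$, where $S_2=-S_1$, the two boundary summands of $h$ add up to $(\beta_{v_1}^{-1}+\beta_{v_2}^{-1})|S_1|^2=\beta_{v_0}(\beta_{v_1}\beta_{v_2})^{-1}|S_1|^2=0$, which equals the $v_0$-summand of $\widetilde h$. Hence $h[f]=\widetilde h[f]$ there and min-max gives $\lambda_k(H)\le\lambda_k(\widetilde H)$. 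In case (vi) we may assume $\beta_{v_1}=0$ (the other possibility being symmetric); then $h$ carries the constraint $S_1=0$, and in either sub-case $\beta_{v_2}=0$ or $\beta_{v_2}\ne0$ one checks directly that $\dom h\subset\dom\widetilde h$ and that, on $\dom h$, the boundary contributions of $h$ and $\widetilde h$ coincide (both $0$ if $\beta_{v_2}=0$, both $\frac{1}{\beta_{v_2}}|S_2|^2$ if $\beta_{v_2}\ne0$), so $h[f]=\widetilde h[f]$ on $\dom h$ and min-max gives $\lambda_k(\widetilde H)\le\lambda_k(H)$. The only step that is not pure bookkeeping is discovering the factorization identity above; everything else — the case split, the sign tracking, and the two degenerate configurations where the form domains no longer coincide — I expect to be routine.
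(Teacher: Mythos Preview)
Your proof is correct, and the treatment of the degenerate cases (v) and (vi) is essentially identical to the paper's. The difference lies in how cases (i)--(iv) are handled. The paper isolates a Cauchy--Schwarz-type inequality as a separate lemma: for $p,q,r>0$ with $\frac{1}{p}+\frac{1}{q}=\frac{1}{r}$ one has $r|a+b|^2\le p|a|^2+q|b|^2$. It then applies this lemma four times with different choices of $p,q,r$ (and, in cases (iii)--(iv), after rewriting $a=(a+b)-b$ or $b=(a+b)-a$), which requires a small amount of case-specific manipulation. Your route replaces this lemma by the exact factorization
\[
\frac{1}{\beta_{v_0}}|S_1+S_2|^2-\frac{1}{\beta_{v_1}}|S_1|^2-\frac{1}{\beta_{v_2}}|S_2|^2=-\frac{\beta_{v_1}\beta_{v_2}}{\beta_{v_0}}\left|\frac{S_1}{\beta_{v_1}}-\frac{S_2}{\beta_{v_2}}\right|^2,
\]
which settles all four non-degenerate cases simultaneously by reading off the sign of $-\beta_{v_1}\beta_{v_2}/\beta_{v_0}$. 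This is a cleaner and more uniform argument; the paper's lemma, on the other hand, is slightly more portable (it is reused in the proof of Theorem~\ref{thm:joining_ExnerConditions}, where the analogue of your identity would need the vertex degrees in place of the $\beta$'s). Either way the underlying content is the same quadratic-form comparison.
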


The proof of this theorem relies on the following simple lemma.

\begin{lemma}
\label{lem:nabitte}
  Let $a,b\in\C$, $p,q,r>0$, $\frac{1}{p} + \frac{1}{q} = \frac{1}{r}$. Then
  \[r |a + b|^2 \leq p |a|^2 + q |b|^2.\]
\end{lemma}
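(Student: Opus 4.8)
The plan is to prove Lemma~\ref{lem:nabitte} by a direct computation, exploiting the constraint $\frac{1}{p}+\frac{1}{q}=\frac{1}{r}$ to reduce the inequality to an application of the Cauchy--Schwarz inequality (or, equivalently, the arithmetic--geometric mean inequality). The key observation is that the weights $p,q,r$ satisfy a relation reminiscent of adding resistors in parallel, which is exactly the pattern that arises when one optimizes $\inf_{a=a_1+a_2}(p|a_1|^2+q|a_2|^2)$; here, however, $a$ and $b$ are given and we want the reverse-type bound, so the roles are arranged accordingly.

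First I would note that it suffices to prove the inequality for $a,b\ge 0$ real, since $|a+b|\le|a|+|b|$ and the right-hand side depends only on $|a|,|b|$; replacing $a,b$ by $|a|,|b|$ can only increase the left-hand side while leaving the right-hand side unchanged. Next I would write, using the Cauchy--Schwarz inequality in the form $(x_1 y_1 + x_2 y_2)^2 \le (x_1^2+x_2^2)(y_1^2+y_2^2)$ with the splitting $a+b = \big(\sqrt{p}\,a\big)\cdot\frac{1}{\sqrt p} + \big(\sqrt{q}\,b\big)\cdot\frac{1}{\sqrt q}$, the estimate
\begin{align*}
 |a+b|^2 \le \big(p|a|^2 + q|b|^2\big)\Big(\tfrac{1}{p}+\tfrac{1}{q}\Big) = \big(p|a|^2 + q|b|^2\big)\cdot \tfrac{1}{r}.
\end{align*}
Multiplying through by $r$ yields exactly $r|a+b|^2 \le p|a|^2 + q|b|^2$, which is the claim.

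Alternatively, and perhaps more transparently, I would expand $p|a|^2+q|b|^2 - r|a+b|^2$ directly for real nonnegative $a,b$, substitute $r = \frac{pq}{p+q}$, and observe that the result is a nonnegative multiple of a perfect square, namely proportional to $(qb - pa)^2$ up to the positive factor $\frac{1}{p+q}$; this makes the nonnegativity manifest and also identifies the equality case ($pa = qb$, together with $a,b$ having the same argument in the complex setting). Either route is a one-line computation.

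There is essentially no obstacle here: the only point requiring a moment's care is the reduction to the real nonnegative case (so that the triangle inequality can be invoked without loss), and the bookkeeping of the constraint $\frac{1}{p}+\frac{1}{q}=\frac{1}{r}$ when it is inserted at the right place. The lemma is stated precisely because it is the elementary inequality that will be fed into the Rayleigh-quotient estimates in the proof of Theorem~\ref{thm:joining_delta'}, where $a$ and $b$ will be the two partial sums $\sum_{j\in E_{v_1}}F_j$ and $\sum_{j\in E_{v_2}}F_j$ and the weights $p,q,r$ will be $\beta_{v_1},\beta_{v_2},\beta_{v_0}$ (in the case of all positive coefficients).
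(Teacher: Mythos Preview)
Your proof is correct and follows essentially the same approach as the paper: both are one-line applications of the Cauchy--Schwarz inequality. Your splitting $a+b=\sqrt{p}\,a\cdot\frac{1}{\sqrt p}+\sqrt{q}\,b\cdot\frac{1}{\sqrt q}$ is in fact slightly more direct than the paper's version, which introduces an auxiliary parameter $y>0$ and then sets $y=\sqrt{p/q}$; note also that your preliminary reduction to real nonnegative $a,b$ is harmless but unnecessary, since Cauchy--Schwarz in $\C^2$ already gives $|a+b|^2\le(p|a|^2+q|b|^2)\big(\tfrac1p+\tfrac1q\big)$ for complex $a,b$.
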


\begin{proof}
Let $y>0$. Then the Cauchy--Schwarz inequality yields
\begin{align*}
\begin{split}
 |a + b|^2 & = \left| \left\langle \binom{\sqrt{y} a}{\frac{b}{\sqrt{y}}}, \binom{\frac{1}{\sqrt{y}}}{\sqrt{y}} \right\rangle \right|^2 \leq  \left(y |a|^2 + \frac{|b|^2}{y} \right) \left( \frac{1}{y} + y \right) \\
 & = \left( 1 + y^2 \right) |a|^2 + \left( \frac{1}{y^2} + 1 \right) |b|^2.
\end{split}
\end{align*}
Let $y:=\sqrt{\frac{p}{q}}>0$. Then $1+y^2 = 1+\frac{p}{q} = \frac{p}{r}$ and $\frac{1}{y^2}+1 = \frac{q}{p}+1 = \frac{q}{r}$, and the desired inequality follows.
\end{proof}

\begin{proof}[Proof of Theorem \ref{thm:joining_delta'}]
As in the proof of the previous theorem we show inequalities for the quadratic forms $h$ and $\widetilde h$ corresponding to $H$ and $\widetilde H$, respectively, under the different conditions. These form inequalities will immediately imply the statements. Let us first discuss the assertions (i)--(iv). These are the cases where the conditions for $H$ at both $v_1$ and $v_2$ are of type II~(b) and the same holds for the conditions for $\widetilde H$ at $v_0$. In particular, for functions in the form domain of $h$ no conditions are imposed at $v_1$ and $v_2$, and the same holds for $\widetilde h$ and $v_0$. Hence the domains of $h$ and $\widetilde h$ coincide. For $f \in \dom h = \dom \widetilde h$ let us set
\begin{align}\label{eq:ab}
 a:=\sum_{j = 1}^{\deg (v_1)} F_j (v_1) \quad \text{and} \quad b:=\sum_{j = 1}^{\deg (v_2)} F_j (v_2).
\end{align}
Then 
\begin{align*}
 a+b = \sum_{j = 1}^{\deg (v_0)} F_j (v_0)
\end{align*}
and
\begin{align}\label{eq:formen}
 \widetilde h [f] - h [f] & = \frac{1}{\beta_{v_0}} |a + b|^2 - \frac{1}{\beta_{v_1}} |a|^2 - \frac{1}{\beta_{v_2}} |b|^2.
\end{align}
Now the assertions (i)--(iv) can be derived as follows: 

(i) If both $\beta_{v_1}, \beta_{v_2} > 0$ (and hence $\beta_{v_0} > 0$) we set $p := \frac{1}{\beta_{v_1}}$, $q := \frac{1}{\beta_{v_2}}$ and $r := \frac{1}{\beta_{v_0}}$, and Lemma~\ref{lem:nabitte} together with~\eqref{eq:formen} yields $\widetilde h [f] \leq h [f]$. Hence, $\widetilde h \leq h$.

(ii)  If $\beta_{v_1}, \beta_{v_2} < 0$ (and hence $\beta_{v_0} < 0$) then Lemma \ref{lem:nabitte} applied with $p = - \frac{1}{\beta_{v_1}}$, $q = - \frac{1}{\beta_{v_2}}$ and $r = - \frac{1}{\beta_{v_0}}$ and~\eqref{eq:formen} yield $h [f] \leq \widetilde h [f]$ and, hence, $h \leq \widetilde h$.

(iii) If $\beta_{v_1} > 0$ and $\beta_{v_2} < 0$ such that $\beta_{v_0} = \beta_{v_1} + \beta_{v_2} > 0$ we set $p:=\frac{1}{\beta_{v_0}}$, $q:=-\frac{1}{\beta_{v_2}}$ and $r:=\frac{1}{\beta_{v_1}}$, and by Lemma \ref{lem:nabitte} we obtain
\begin{align*}
 \frac{1}{\beta_{v_1}} |a|^2 = r|a|^2 = r|a + b - b|^2\leq p|a+b|^2 + q|b|^2 = \frac{1}{\beta_{v_0}} |a + b|^2 - \frac{1}{\beta_{v_2}} |b|^2.
\end{align*}
Hence~\eqref{eq:formen} yields $\widetilde h [f] \geq h [f]$, that is, $h \leq \widetilde h$.

(iv) If $\beta_{v_1}>0$ and $\beta_{v_2}<0$ such that $\beta_{v_0} = \beta_{v_1}+\beta_{v_2}<0$ we set $p:=-\frac{1}{\beta_{v_0}}$, $q:=\frac{1}{\beta_{v_1}}$ and $r:=-\frac{1}{\beta_{v_2}}$. Then by Lemma~\ref{lem:nabitte} we obtain
\begin{align*}
 - \frac{1}{\beta_{v_2}} |b|^2 = r|b|^2 = r|a + b - a|^2\leq p|a+b|^2 + q|a|^2 = - \frac{1}{\beta_{v_0}} |a + b|^2 + \frac{1}{\beta_{v_1}} |a|^2.
\end{align*}
Together with~\eqref{eq:formen} this gives $\widetilde h [f] \leq h [f]$, that is, $\widetilde h \leq h$.

It remains to treat the cases where zero appears as a coefficient. Under the conditions of (v), let $f \in \dom \widetilde h$. Then clearly $f \in \dom h$ (which does not require any conditions at $v_1$ or $v_2$) and with $a$ and $b$ defined in~\eqref{eq:ab} we have $a + b = 0$. Hence
\begin{align*}
 \widetilde h [f] - h [f] = - \frac{1}{\beta_{v_1}} |a|^2 - \frac{1}{\beta_{v_2}} |b|^2 = - \Big( \frac{1}{\beta_{v_1}} + \frac{1}{\beta_{v_2}} \Big) |a|^2 = 0
\end{align*}
as $\beta_{v_2} = - \beta_{v_1}$. Hence $h \leq \widetilde h$, which implies (v).

For the remaining assertion (vi) let us first look at the case $\beta_{v_1} = 0$, $\beta_{v_2} \neq 0$. In this situation let $f \in \dom h$. Then clearly $f \in \dom \widetilde h$ (no condition at $v_0$ is required since $\beta_{v_0} \neq 0$) and in the above notation we have $a = 0$. Then
\begin{align*}
 \widetilde h [f] - h [f] = \frac{1}{\beta_{v_0}} |a + b|^2 - \frac{1}{\beta_{v_2}} |b|^2 = \frac{1}{\beta_{v_2}} |b|^2 - \frac{1}{\beta_{v_2}} |b|^2 = 0.
\end{align*}
Thus $\widetilde h \leq h$. The case $\beta_{v_1} \neq 0, \beta_{v_2} = 0$ is analogous. In the final case that $\beta_{v_0} = \beta_{v_1} = \beta_{v_2} = 0$ let $f \in \dom h$. Then in the above notation we have $a = b = 0$ and, in particular, $a + b = 0$ which implies $f \in \dom \widetilde h$. Moreover, we see directly $h [f] - \widetilde h [f] = 0$. Thus $\widetilde h \leq h$. This completes the proof.
\end{proof}

We provide an example where strict inequality appears in the setting of Theorem~\ref{thm:joining_delta'}~(vi).

\begin{example}
  Let $\Gamma$ be the graph with two vertices and one edge of length $1$ connecting these two and let $H$ be the Laplacian in $L^2(\Gamma)$ with type II~(a) (anti-Kirchhoff) couplings at both vertices; since the degree of the vertices is $1$, this results in Dirichlet boundary conditions. Hence, $\lambda_k(H) = \pi^2k^2$ for $k\in\N$. Now, let us join the two vertices, see Figure \ref{fig:joining_example}.
  
  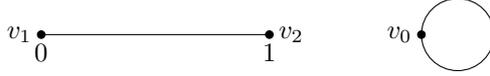
\begin{figure}[htb]
    \centering
    \begin{tikzpicture}
      \draw[fill] (0,0) circle(0.05) node[left]{$v_1$};
      \draw (0,0) node[below]{$0$};
      \draw[fill] (3,0) circle(0.05) node[right]{$v_2$};
      \draw (3,0) node[below]{$1$};
      \draw (0,0)--(3,0);
      \begin{scope}[shift={(5,0)}]
	\draw[fill] (0,0) circle(0.05) node[left]{$v_0$};
	\draw (0,0) arc(180:540:{3/(2*pi)});
      \end{scope}
    \end{tikzpicture}
    \caption{Left: $\Gamma$, right: $\widetilde{\Gamma}$.}
    \label{fig:joining_example}
  \end{figure}
  
  Then $\widetilde{H}$ is unitarily equivalent to the Laplacian on the interval $[0,1]$ with conditions $f'(0) = -f'(1)$ and $f(0) = -f(1)$, i.e.\ anti-periodic boundary conditions.
  Hence, $\lambda_{2k-1}(\widetilde{H}) = \lambda_{2k}(\widetilde{H}) = \pi^2(2k-1)^2$ for all $k\in\N$. Thus, the inequality between the eigenvalues is strict for the even indices and an equality for the odd indices.
\end{example}

The following last theorem of this section deals with joining vertices with conditions of type III~(a) or (b). In view of the form of these conditions summing up the corresponding coefficients does not seem appropriate. Instead we join vertices with the same strength of interaction.

\begin{theorem}\label{thm:joining_ExnerConditions}
Let $\Gamma$ be a finite, compact metric graph and let $H$ be a Schr\"odinger operator in $L^2 (\Gamma)$ with real-valued potential $q\in L^\infty(\Gamma)$ and local, self-adjoint coupling conditions at the vertices; cf.~Proposition~\ref{prop:SAconditions}. Assume that $v_1, v_2$ are two distinct vertices of $\Gamma$ such that the vertex conditions of $H$ at $v_1$ and $v_2$ are either both of type III~(a) according to Classification~\ref{class}, with coefficients $C_{v_1} = C_{v_2} \in \R$ or both of type III~(b) with coefficients $D_{v_1} = D_{v_2} \neq 0$. Denote by $\widetilde \Gamma$ the graph obtained from $\Gamma$ by joining $v_1$ and $v_2$ to form one single vertex $v_0$. Let $\widetilde H$ be the self-adjoint Schr\"odinger operator in $L^2 (\widetilde \Gamma)$ with potential $q$ having the same vertex conditions as $H$ at all vertices apart from $v_0$ and satisfying, at $v_0$, conditions of the same form as $H$ satisfies at $v_1$ and $v_2$, with $C_{v_0} := C_{v_1} = C_{v_2}$ or $D_{v_0} := D_{v_1} = D_{v_2}$, respectively, at $v_0$. Then the following assertions hold.
\begin{enumerate}
 \item If the conditions at $v_1, v_2, v_0$ are of type III~(a) then $\lambda_k (\widetilde H) \leq \lambda_k (H)$ for all $k \in \N$.
 \item If the conditions at $v_1, v_2, v_0$ are of type III~(b) with coefficient $D_{v_0} > 0$ then $\lambda_k (H) \leq \lambda_k (\widetilde H)$ for all $k \in \N$.
 \item If the conditions at $v_1, v_2, v_0$ are of type III~(b) with coefficient $D_{v_0} < 0$ then $\lambda_k (\widetilde H) \leq \lambda_k (H)$ for all $k \in \N$.
\end{enumerate}
\end{theorem}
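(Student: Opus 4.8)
The plan is to prove all three assertions by establishing inequalities between the quadratic forms $h$ and $\widetilde h$ corresponding to $H$ and $\widetilde H$, exactly as in the proofs of Theorems~\ref{thm:joining} and~\ref{thm:joining_delta'}; the eigenvalue inequalities then follow immediately from the min-max principle. The crucial point in all cases is that conditions of type III~(a) and III~(b) impose the constraint that the vector $F'(v)$ (resp.\ $F(v)$) differences are controlled, but in the form domain only the Dirichlet-type part of the condition survives: for type III~(a) this is $\sum_j F_j(v) = 0$, whereas for type III~(b) no condition at all is imposed on the form domain (cf.\ Lemma~\ref{lem:forms}~(v)--(vi)). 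In each case one must compare the contribution at $v_0$ with the sum of the contributions at $v_1$ and $v_2$, using the relevant formula from Lemma~\ref{lem:forms}.

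First I would treat assertion (i), the type III~(a) case. Here $\dom\widetilde h\subset\dom h$: a function $f\in\dom\widetilde h$ satisfies $\sum_{j=1}^{\deg(v_0)}F_j(v_0)=0$, which does \emph{not} automatically give $\sum F_j(v_1)=0$ and $\sum F_j(v_2)=0$ separately --- so actually I expect the inclusion to go the other way, and the correct statement is $\dom h\subset\dom\widetilde h$ (a function vanishing in sum at both $v_1$ and $v_2$ certainly vanishes in sum at $v_0$), hence the natural direction $\lambda_k(\widetilde H)\le\lambda_k(H)$ comes from a form inequality $\widetilde h\le h$ on $\dom h$. For $f\in\dom h$, Lemma~\ref{lem:forms}~(v) gives the $v_1,v_2$ contributions as $C_{v_0}\sum_{j}|F_j(v_1)|^2$ and $C_{v_0}\sum_j|F_j(v_2)|^2$, whose sum is exactly $C_{v_0}\sum_{j=1}^{\deg(v_0)}|F_j(v_0)|^2$, the contribution at $v_0$; all other form terms are identical. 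Thus $\widetilde h[f]=h[f]$ on $\dom h$, giving (i).

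For assertions (ii) and (iii), the type III~(b) case, neither form imposes any vertex condition at $v_1,v_2$ or at $v_0$, so $\dom h=\dom\widetilde h$. Set $a:=\sum_{j=1}^{\deg(v_1)}F_j(v_1)$ and $b:=\sum_{j=1}^{\deg(v_2)}F_j(v_2)$, and let $S:=\sum_{j=1}^{\deg(v_0)}|F_j(v_0)|^2=\sum_j|F_j(v_1)|^2+\sum_j|F_j(v_2)|^2$, with $d_1:=\deg(v_1)$, $d_2:=\deg(v_2)$, $d_0:=d_1+d_2$. By Lemma~\ref{lem:forms}~(vi),
\begin{align*}
 \widetilde h[f]-h[f] &= \frac{1}{D_{v_0}}\Bigl(S-\frac{|a+b|^2}{d_0}\Bigr) - \frac{1}{D_{v_0}}\Bigl(\sum_j|F_j(v_1)|^2-\frac{|a|^2}{d_1}\Bigr) - \frac{1}{D_{v_0}}\Bigl(\sum_j|F_j(v_2)|^2-\frac{|b|^2}{d_2}\Bigr) \\
 &= \frac{1}{D_{v_0}}\Bigl(\frac{|a|^2}{d_1}+\frac{|b|^2}{d_2}-\frac{|a+b|^2}{d_0}\Bigr).
\end{align*}
The quantity in parentheses is nonnegative: apply Lemma~\ref{lem:nabitte} with $p=d_1$, $q=d_2$, $r$ defined by $\frac1r=\frac1{d_1}+\frac1{d_2}$, which gives $\frac{d_1 d_2}{d_1+d_2}|\tfrac{a}{d_1}+\tfrac{b}{d_2}|^2\le\frac{|a|^2}{d_1}+\frac{|b|^2}{d_2}$ --- but this is not quite the expression I need, so instead I would prove the elementary inequality $\frac{|a|^2}{d_1}+\frac{|b|^2}{d_2}\ge\frac{|a+b|^2}{d_1+d_2}$ directly (it is the $p=d_1$, $q=d_2$, $r=d_1+d_2$ \emph{fails} the hypothesis $\frac1p+\frac1q=\frac1r$, so this needs a separate one-line Cauchy--Schwarz argument: $|a+b|^2=|\langle(a/\sqrt{d_1},b/\sqrt{d_2}),(\sqrt{d_1},\sqrt{d_2})\rangle|^2\le(\frac{|a|^2}{d_1}+\frac{|b|^2}{d_2})(d_1+d_2)$). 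Hence the parenthesized term is $\ge0$. Therefore if $D_{v_0}>0$ then $\widetilde h[f]\ge h[f]$, giving $h\le\widetilde h$ and $\lambda_k(H)\le\lambda_k(\widetilde H)$, which is (ii); if $D_{v_0}<0$ then $\widetilde h[f]\le h[f]$, giving (iii).

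The main obstacle is bookkeeping rather than depth: one must be careful about which form domain contains which (type III~(a) needs the inclusion $\dom h\subset\dom\widetilde h$, reversing the naive guess) and must correctly expand the $v_0$-contribution in terms of the $v_1$- and $v_2$-data, keeping track of the degrees $d_1,d_2,d_0$ and of the cross term in $|a+b|^2$. The only genuine inequality used is the elementary Cauchy--Schwarz estimate $\frac{|a|^2}{d_1}+\frac{|b|^2}{d_2}\ge\frac{|a+b|^2}{d_1+d_2}$, which is in the same spirit as Lemma~\ref{lem:nabitte}; everything else is an identity.
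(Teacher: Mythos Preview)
Your proposal is correct and follows essentially the same route as the paper: for type III~(a) you show $\dom h\subset\dom\widetilde h$ with $\widetilde h[f]=h[f]$ on $\dom h$, and for type III~(b) you compute $\widetilde h[f]-h[f]=\frac{1}{D_{v_0}}\bigl(\frac{|a|^2}{d_1}+\frac{|b|^2}{d_2}-\frac{|a+b|^2}{d_0}\bigr)$ and show the bracket is nonnegative. One small point: you claim Lemma~\ref{lem:nabitte} does not directly apply and supply a separate Cauchy--Schwarz argument, but in fact it does apply with $p=\frac{1}{d_1}$, $q=\frac{1}{d_2}$, $r=\frac{1}{d_0}$, since then $\frac{1}{p}+\frac{1}{q}=d_1+d_2=d_0=\frac{1}{r}$, which is exactly how the paper invokes it; your direct argument is of course equally valid.
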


\begin{proof}
Let again $h$ and $\widetilde h$ be the quadratic forms corresponding to $H$ and $\widetilde H$, respectively. Let first the conditions at $v_1, v_2, v_0$ be of type III~(a), let $f \in \dom h$, and let us set again
\begin{align}\label{eq:abNochmal}
 a:=\sum_{j = 1}^{\deg (v_1)} F_j (v_1) \quad \text{and} \quad b:=\sum_{j = 1}^{\deg (v_2)} F_j (v_2).
\end{align}
Then $a = b = 0$ and, hence, $a + b = 0$, so that $f \in \dom \widetilde h$. Moreover, by Lemma~\ref{lem:forms}~(v),
\begin{align*}
 \widetilde h [f] - h [f] & = C_{v_0} \sum_{j = 1}^{\deg (v_0)} |F_j (v_0)|^2 - C_{v_1} \sum_{j = 1}^{\deg (v_1)} |F_j (v_1)|^2 - C_{v_2} \sum_{j = 1}^{\deg (v_2)} |F_j (v_2)|^2 \\
 & = 0
\end{align*}
as $C_{v_0} = C_{v_1} = C_{v_2}$. Hence $\widetilde h \leq h$, which implies~(i).

Assume now that the conditions at $v_1, v_2, v_0$ are of type III~(b). Then the domains of $h$ and $\widetilde h$ coincide, and for each $f \in \dom h = \dom \widetilde h$ and $a, b$ defined in~\eqref{eq:abNochmal}, Lemma~\ref{lem:forms}~(vi) yields
\begin{align*}
 \widetilde h [f] - h [f] & = \frac{1}{D_{v_0}} \bigg( \sum_{j = 1}^{\deg (v_0)} |F_j (v_0)|^2 - \frac{1}{\deg (v_0)} |a + b|^2 \bigg) \\
 & \qquad - \frac{1}{D_{v_1}} \bigg( \sum_{j = 1}^{\deg (v_1)} |F_j (v_1)|^2 - \frac{1}{\deg (v_1)} |a|^2 \bigg) \\
 & \qquad - \frac{1}{D_{v_2}} \bigg( \sum_{j = 1}^{\deg (v_2)} |F_j (v_2)|^2 - \frac{1}{\deg (v_2)} |b|^2 \bigg) \\
 & = \frac{1}{D_{v_0}} \bigg( \frac{1}{\deg (v_1)} |a|^2 + \frac{1}{\deg (v_2)} |b|^2 - \frac{1}{\deg (v_0)} |a + b|^2 \bigg).
\end{align*}
As $\deg (v_0) = \deg (v_1) + \deg (v_2)$ we can apply Lemma~\ref{lem:nabitte} in order to see that the term in the brackets is always nonnegative. From this it follows that $h \leq \widetilde h$ if $D_{v_0} > 0$ and $\widetilde h \leq h$ if $D_{v_0} < 0$. This leads to the assertions (ii) and (iii).
\end{proof}

\end{document}